\documentclass[12pt]{article}
\usepackage{authblk}
\usepackage{hyperref}
\usepackage{amssymb}
\usepackage{amsthm}
\usepackage{amsmath}
\usepackage{multirow}
\usepackage{float}
\usepackage{graphicx}
\usepackage{listings}
\usepackage{MnSymbol}
\usepackage{titlesec}
\usepackage{csquotes}
\newtheorem{theorem}{Theorem}[section]

\newtheorem{proposition}[theorem]{Proposition}

\newtheorem{definition}[theorem]{Definition}
\newtheorem{example}[theorem]{Example}

\newtheorem{remark}[theorem]{Remark}

\titleclass{\subsubsubsection}{straight}[\subsubsection]

\newcounter{subsubsubsection}[subsubsection]
\renewcommand\thesubsubsubsection{\thesubsubsection.\arabic{subsubsubsection}}

\titleformat{\subsubsubsection}
  {\normalfont\normalsize\bfseries}
  {\thesubsubsubsection}{1em}{}

\titlespacing*{\subsubsubsection}
  {0pt}{3.25ex plus 1ex minus .2ex}{1.5ex plus .2ex}

\newcounter{subsubsubsubsection}[subsubsubsection]

\renewcommand\thesubsubsubsubsection
  {\thesubsubsubsection.\arabic{subsubsubsubsection}}

\titleformat{\subsubsubsubsection}
  {\normalfont\normalsize\bfseries}
  {\thesubsubsubsubsection}{1em}{}

\titlespacing*{\subsubsubsubsection}
  {0pt}{3.25ex plus 1ex minus .2ex}{1.5ex plus .2ex}
\usepackage{tikz}
\usetikzlibrary{trees}
\begin{document}
\title{\Large\bfseries Linearization Problem for a System of \\Two Second-Order ODEs via Cartan's \\Method: Branch I}
\author[1]{Batoul M. Raddad\thanks{1227002@student.birzeit.edu}}
\author[1]{Ahmad Y. Al-Dweik\thanks{aaldweik@birzeit.edu}}
\author[1]{Marwan Aloqeili\thanks{maloqeili@birzeit.edu}}
\author[2]{F. M. Mahomed\thanks{Corresponding Author: Fazal.Mahomed@wits.ac.za}}
\affil[1]{Department of Mathematics, Birzeit University, Ramallah, Palestine}
\affil[2]{School of Computer Science and Applied Mathematics, 
University of the Witwatersrand, Johannesburg, Wits 2050,  South Africa}
\maketitle
\begin{abstract}
Cartan's method classifies the class of linearizable system of two second-order ODEs into many branches. This paper investigates Branch I of the classification, characterized by a rank-one generalized Wilczynski  invariant matrix and the vanishing of two relative invariants $K_1$ and $L_1$. It is demonstrated that any linearizable system belonging to this branch admits an eight-dimensional Lie point symmetry algebra. The canonical form for this class is provided and the invariant characterizations based on the obtained rank-zero invariant coframe and the corresponding constant structure equations are established. Also, a systematic procedure for constructing the linearizing point transformation is derived. The theoretical results are illustrated by several examples.	
\end{abstract}
\bigskip
Keywords:   Cartan's equivalence method, Point transformation, Linearization problem, Generalized Wilczynski  invariant matrix, Generic system, Non-generic system, The canonical Laguerre-Forsyth form, Invariant coframe.

\section{Introduction and Literature Review}
 The symmetry structures for systems of two second-order ordinary differential equations (ODEs)  are of significant interest.  Gorringe and Leach \cite{p13} 
showed that every linear system of two second-order ODEs with constant coefficients of the form $\textbf{u}^{\prime\prime}=A\textbf{u}$ can be transformed into an equivalent system of the form $\textbf{u}^{\prime\prime}=M\textbf{u}$, where $M$ is a constant upper triangular matrix. Moreover, the  dimension of the corresponding Lie symmetry algebra is determined by the coefficient matrix $M$. In particular, such systems admit seven, eight or fifteen Lie point symmetries.
In \cite{R1}, Wafo Soh and Mahomed demonstrated that a linear system of two second-order ODEs may admit five, six, seven, eight or fifteen point symmetries. They showed that any linear system of two second-order ODEs can be reduced to the free particle equations 
\begin{equation}\label{two}
u''_i=0,~~~i=1,2
\end{equation}
 if and only if it admits the maximal number of symmetries. Moreover, a system of two second-order ODEs admitting five or six Lie point symmetries cannot be transformed into a system with constant coefficients. In contrast, a system with either constant or variable coefficients may admit seven or eight Lie point symmetries. Wafo Soh \cite{p37} studied the symmetry structure
of a system of $n\geq2$ linear second-order ODEs with constant coefficients using Jordan canonical forms and appropriate scaling. He proved that any system of two linear second-order ODEs with complex constant coefficients admits a seven-, eight- or fifteen-dimensional Lie symmetry algebra. In addition, he established three canonical forms for such systems. In \cite{p30}, Meleshko noted that Wafo Soh \cite{p37} had previously studied systems of linear second-order ODEs with constant coefficients but in cases where the coefficient matrix has only real eigenvalues. In contrast, he mentioned that to complete the analysis, one needs to include the cases involving complex eigenvalues corresponding to diagonal Jordan blocks. In \cite{p33}, Meleshko et al. mentioned that any linear system of two second-order ODEs of the form $\textbf{u}^{\prime\prime}=A\textbf{u}^{\prime}+B\textbf{u}+f$,
where $A$ and $B$ are constant matrices that commute, can be reduced to the simpler form $\textbf{u}^{\prime\prime}=M\textbf{u}$, with $M$ being a constant matrix. However, they extended this work by completing the group classification where the commutative property does not hold. In this case, the system can be reduced to the form
$\textbf{u}^{\prime\prime}=M(x)\textbf{u}$, with $M(x)$ depending on the independent variable. Bagderina \cite{p20} classified linear system of two 
second-order ODEs into five types and provided the basis of invariants and invariant differentiation operators using Lie's method. These invariants offer a straightforward method for identifying equations that may be equivalent to a given system, as well as determining the transformations that relate two equivalent systems. In \cite{R2}, Bagderina extended her work and provided a symmetry characterization of an arbitrary system of two homogeneous linear second-order ODEs using invariant theory. This invariant-based classification yields eleven non-equivalent symmetry types. However, the absence of explicit canonical forms for the classes of linear systems remains a major challenge.\\

The linearization problem for an ODE (or a system of ODEs) under point transformations consists of establishing necessary and sufficient conditions for local equivalence to a linear ODE (or system of ODEs) and constructing the corresponding linearizing point transformation. Algebraic linearization criteria for dimension four algebras for systems of two second-order ODEs were studied in the works Ayub et al., and Wafo and Mahomed \cite{Ay,WM}. Extensions of Lie algebraic criteria for linearizability of system of second-order ODEs are also given in \cite{Ay}. The linearization problem for a system of $n$ second-order ODEs has been extensively studied for many years and continues to attract significant attention. Chern \cite{R3} employed Cartan's equivalence method to analyze the equivalence of systems of $n$ second-order ODEs under invertible Lie transformations of the form $\bar{t}=t,~ \bar{x}^{i}=\psi^{i}(x^{j})$ and $\bar{t}=t$, $\bar{x}^{i}=\psi^{i}(t, x^{j})$. Fels \cite{p15} applied Cartan's equivalence method to study the equivalence of the system of $n$ second-order ODEs
\begin{equation}\label{nsys}
\mathbf{u}'' = \mathbf{f}(x,\mathbf{u},\mathbf{p}), 
\qquad \mathbf{u}=(u_1,u_2,\dots,u_n), \qquad  \mathbf{p}=(p_1,p_2,\dots,p_n),
\end{equation}
where $p_i$ denotes $u'_i$ and $\mathbf{f}=(f^1,f^2,\dots,f^n)$ is a smooth 
vector-valued function, under point transformation to the free particle system 
\begin{equation}
\mathbf{u}'' =0.
\end{equation}
He found that the associated structure group has dimension $n^2+4n+3$. He proved the following theorem.
\begin{theorem}\cite{p15}
There exists a unique $\{e\}-$structure with a maximal dimensional symmetry group. For this $\{e\}-$structure the structure functions vanish, and a representative for the system of equations (\ref{nsys}) giving rise to this $\{e\}-$structure is the system of free particle equations 
\begin{equation}\label{m}
u''_i=0,~~~i=1, ..., n.
\end{equation}
The equivalence class of these equations is invariantly characterized by the two vanishing conditions
\begin{equation}\label{Fels}
\begin{aligned}
&(\tilde{P}^{i}_{j})|_{e}=\frac{1}{2}\frac{d}{dx}f^{i}_{|j}-f^{i}_{,j}-\frac{1}{4}f^{i}_{|k}f^{k}_{|j}-\frac{1}{n}\delta^{i}_{j}(\frac{1}{2}\frac{d}{dx}f^{k}_{|k}-f^{k}_{,k}-\frac{1}{4}f^{l}_{|k}f^{k}_{|l})=0,\\
&(\tilde{S}^{i}_{jkl})_{|e}=f^{i}_{|jkl}-\frac{3}{n+2}f^{m}_{|m(jk}\delta^{i}_{l)}=0, ~~~ where~~~ f^{i}_{,j}=\frac{\partial f^{i}}{\partial u_{j}} ~~and~~ f^{i}_{|j}=\frac{\partial f^{i}}{\partial p_{ j}}.\\
\end{aligned}
\end{equation}\end{theorem}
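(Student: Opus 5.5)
We follow Fels' use of Cartan's equivalence method for the system \eqref{nsys} under point transformations. Encode \eqref{nsys} on the first jet space $J^1$ with coordinates $(x,u_i,p_i)$ by the coframe
\[
\theta^i = du_i - p_i\,dx,\qquad \omega = dx,\qquad \pi^i = dp_i - f^i\,dx .
\]
Point transformations act on this coframe by a structure group $G$ of the form
\[
\theta\mapsto A\theta,\qquad \omega\mapsto a\,\omega + b_i\theta^i,\qquad \pi\mapsto c\,\theta + e\,\pi + d\,\omega .
\]
After the normalizations forced by the structure equations, $G$ has dimension $n^2+4n+3$.

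The first step is to lift to $J^1\times G$ and compute $d\theta^i$, $d\omega$, $d\pi^i$. Absorbing as much torsion as possible and normalizing group parameters (first the $\pi$–$\omega$ torsion, then the trace parts) leaves essential torsion whose coefficients transform tensorially. A direct computation expresses this torsion in terms of $f$:
\begin{itemize}
\item the trace-free part of $\tfrac12\frac{d}{dx}f^i_{|j}-f^i_{,j}-\tfrac14 f^i_{|k}f^k_{|j}$ gives $\tilde P^i_j$;
\item the trace-free part of $f^i_{|jkl}$ gives $\tilde S^i_{jkl}$.
\end{itemize}
Since these torsion coefficients cannot be absorbed, we obtain a prolonged $G$-structure. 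Prolonging once more gives an $\{e\}$-structure on a bundle $B$ of dimension $(2n+1)+(n^2+4n+3)$. The structure functions of this $\{e\}$-structure are built from $\tilde P$, $\tilde S$ and their covariant derivatives.

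The second step proves maximality and uniqueness. The symmetry group of an $\{e\}$-structure on a connected manifold $B$ acts freely. Hence its dimension is at most $\dim B$, and equality holds iff all structure functions are constant. Using the $d^2=0$ identities (Bianchi identities), we show that constancy forces the torsion to vanish. The point is that $\tilde P$ and $\tilde S$ transform under the group by nontrivial representations (a weighted adjoint action and a trace-free tensor action). So they are constant on fibres only if they are zero, and the remaining curvature terms are then expressed through their derivatives. Thus maximal symmetry gives exactly one $\{e\}$-structure, namely the one with all structure functions zero, i.e.\ the Maurer–Cartan equations of $\mathfrak{sl}(n+2,\mathbb R)$. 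Cartan's third theorem and local uniqueness (two coframes with identical constant structure equations are locally equivalent) then give uniqueness.

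The third step identifies the representative: for $u''_i=0$ we have $f=0$, so $\tilde P=\tilde S=0$, and its symmetry algebra $\mathfrak{sl}(n+2)$ has dimension $(n+2)^2-1=n^2+4n+3$, matching the count. Conversely, if \eqref{Fels} holds, all structure functions vanish by step two, and the system is locally equivalent to \eqref{m}. This gives the invariant characterization.

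The main obstacle is the absorption/normalization bookkeeping in the first step. One must show that, after absorption, the essential torsion is exactly the trace-free parts $\tilde P$ and $\tilde S$, with the precise coefficients $\tfrac12,\tfrac14,\tfrac1n,\tfrac{3}{n+2}$, and that nothing else survives at this level. We would handle this by working at the identity section ($|_e$) and using the equivariance of the torsion to recover the full coefficients.
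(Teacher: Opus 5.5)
The paper gives no proof of this statement; it is quoted from Fels. Your outline follows Fels's architecture: one $\{e\}$-structure for all systems, a free symmetry action, equivariant fundamental invariants forced to vanish when constant, and hence the Maurer--Cartan equations of $\mathfrak{sl}(n+2)$. The step where nontrivial scaling weights force constant invariants to vanish is sound. But two parts of your setup are wrong as written, and they break the argument.

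First, the structure group is wrong. The substitution $\pi\mapsto c\,\theta+e\,\pi+d\,\omega$ is not admissible. An equivalence must preserve the Pfaffian system $\{\theta^i,\pi^i\}$, whose integral curves are the prolonged solutions, so $\bar\pi$ must lie in the span of $\theta,\pi$. This is why the group $G$ of Section 3 has $3n^2+n+1=15$ parameters for $n=2$, with no $dx$-entry in the $\pi$-rows. If $d$ is free, the element with $A=e=I$, $a=1$, $b=c=0$, $d=-f$ carries $(\theta,\omega,dp)$ to $(\theta,\omega,dp-f\,dx)$. So your $G$-structure does not depend on $f$ at all, and no torsion such as $\tilde P$ or $\tilde S$ can ever appear.

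Second, the dimension count is wrong. Normalization does not leave a group of dimension $n^2+4n+3$. Normalizing $e=A/a$ and the trace-free part of $A^{-1}c$ leaves $n^2+n+2$ parameters: $A$, $a$, $b$, and a scalar $\lambda$ with $c=\lambda A$. For $n=2$ this is the $8$-dimensional $G_2$ of Section 3. Prolongation adds $n$ more, so the fibre is the $(n^2+2n+2)$-dimensional parabolic subgroup of $SL(n+2)$, and $\dim B=n^2+4n+3$. With your $\dim B=(2n+1)+(n^2+4n+3)=n^2+6n+4$, step two makes the maximal symmetry dimension $n^2+6n+4$. Then ``matching the count'' in step three is false: $\mathfrak{sl}(n+2)$ would not be maximal, contradicting your claim that $\tilde P=\tilde S=0$ makes all structure functions constant. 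With the correct count, freeness together with $\dim\mathfrak{sl}(n+2)=\dim B$ does close the maximality argument.

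Beyond these repairable errors, what actually constitutes Fels's theorem is asserted rather than shown. You need to prove two things:
\begin{enumerate}
\item After absorption, the essential torsion is exactly the trace-free parts in the statement, with coefficients $\tfrac12,\tfrac14,\tfrac1n,\tfrac{3}{n+2}$, and nothing else survives.
\item Every remaining structure function of the prolonged $\{e\}$-structure is built from $\tilde P$, $\tilde S$ and their covariant derivatives.
\end{enumerate}
The second claim is a Bianchi-identity computation. It is needed in both directions: for ``constant $\Rightarrow$ all curvature vanishes'' and for ``$\tilde P=\tilde S=0$ $\Rightarrow$ flat''.

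You should also state explicitly that $\tilde P,\tilde S$ are carried along and \emph{not} normalized. They may vanish, and normalizing them produces branching, as in this paper's rank-$W$ analysis. Only by not normalizing does one bundle $B$ of fixed dimension serve every system. Your phrase ``since these torsion coefficients cannot be absorbed, we obtain a prolonged $G$-structure'' conflates normalization with prolongation.

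As it stands, the proposal is a roadmap of Fels's proof with a faulty setup, not a proof.
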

Wilczynski \cite{26} introduced the Wilczynski invariants as the fundamental set of invariants for linear scalar ODEs under the action of the Lie pseudogroup
\begin{equation}
(x,y)\rightarrow(\lambda(x),\eta(x)y).
\end{equation} They form a complete and minimal set of relative invariants from which all other invariants can be derived by invariant differentiation.
These invariants were later extended to any system of linear ODEs by Se-ashi \cite{20}.
Boris \cite{21} introduced the generalized Wilczynski invariants evaluated at any linearizable system of ODEs. Associated with system (\ref{nsys}), the generalized Wilczynski invariant matrix $W=(W^i_j)$ is defined by the trace-free matrix
\begin{equation}
W
=
\Phi - \frac{1}{2}(\mathrm{tr}\,\Phi)\,I,
\end{equation}
where  
\begin{equation}
\Phi = A - \frac{1}{2}D_x B +\frac{1}{4}B^2,
\end{equation}
and
\begin{equation}
A^i_j = \frac{\partial f^i}{\partial u_j}, 
\qquad 
B^i_j = \frac{\partial f^i}{\partial p_j},
\end{equation}
and $D_x$ is the total derivative $D_x = \frac{\partial}{\partial x}
+ p_k\frac{\partial}{\partial u_k}
+ f^k\frac{\partial}{\partial p_k}.$ He mentioned that the generalized Wilczynski invariant matrix vanishes for the free particle equations (\ref{m}) and therefore any equivalent system.
Sakka \cite{p18} demonstrated that any linearizable system of two second-order ODEs must take the form
 \begin{equation}\label{cubic}
\begin{aligned}
& u_{1}^{\prime\prime}+a_{11}{u'_1}^{3}+a_{12}{u'_1}^{2}u^{\prime}_{2}+a_{13}u^{\prime}_{1}{u'_2}^{2}+a_{14}{u'_1}^{2}+a_{15}u^{\prime}_{1}u^{\prime}_{2}+a_{16}{u'_2}^{2}\\
&+a_{17}u^{\prime}_{1}+a_{18}u^{\prime}_{2}+a_{19}=0,\\
&u_{2}^{\prime\prime}+a_{13}{u'_2}^{3}+a_{12}{u'_2}^{2}u^{\prime}_{1}+a_{11}u^{\prime}_{2}{u'_1}^{2}+a_{24}{u'_1}^{2}+a_{25}u^{\prime}_{2}u^{\prime}_{1}+a_{26}{u'_2}^{2}\\
&+a_{27}u^{\prime}_{1}+a_{28}u^{\prime}_{2}+a_{29}=0,\\
\end{aligned}
\end{equation} 
where the coefficients $a_{ij}$ depend on $x, u_{1}$ and $u_{2}$. He addressed the linearization problem for such systems using Lie’s method by identifying the equivalence group of transformations associated with system (\ref{cubic}) and deriving the corresponding invariant equations. By solving these equations, he determined the first and second-order relative invariants of the transformations, which serve as necessary conditions but not necessarily sufficient for linearizability. Mahomed and Qadir \cite{p26} showed that the minimum and maximum possible dimensions of the Lie symmetry algebra of a linearizable system of $n$ second-order ODEs are $2n+1$ and $n^{2}+4n+3$, respectively. They also proved that, besides the minimal dimension $2n+1$, the other possible dimensions range from $2n+2$ to $(n+2)^{2}/2$ when $n$ is even, and from $2n+2$ to $[(n+2)^{2}+1]/2$ when $n$ is odd. Furthermore, they derived necessary and sufficient conditions under which a system of two nonlinear second-order ODEs, cubic in the first derivatives and associated with the maximal dimension of the symmetry algebra, can be reduced to the free particle equations via a point transformation. They suggested that the linearization problem for systems associated with the other symmetry algebras remains unresolved. They wrote:
\begin{displayquote}
\textit{"It would be important to find ways of providing the linearizability criteria for the cases of the
other symmetry algebras".}
\end{displayquote}
Neut et al. \cite{R5} used Cartan’s method to determine the conditions under which a system of two second-order ODEs is equivalent to the free particle equations (\ref{two}). They obtained 88 fundamental invariants using Cartan’s method. By Poincaré lemma, they showed that only two of these invariants are sufficient to form a basis for the differential ideal generated by all 88 invariants where these two fundamental invariants identically vanish at the free particle equations (\ref{two}). They proved the following theorem.
\begin{theorem}\label{Nuet3} \cite{R5}
The system $u''_1=f^1(x,u_1,u_2,p_1,p_2),u''_2=f^2(x,u_1,u_2,p_1,p_2)$ is equivalent to the system $u''_1=0,u''_2=0$ under a point transformation if and only if
\begin{equation}\label{Nuet1}
\begin{aligned}
&2D_xf^1_{p_1}-f^1_{p_2}f^1_{p_1}-f^1_{p_2}f^2_{p_2}-4f^1_{u_2}=0,\\
&(f^1_{p_1})^2-(f^2_{p_2})^2+4(f^1_{u_1}-f^2_{u_2})+2D_x(f^2_{p_2}-f^1_{p_1})=0,\\
&2D_xf^2_{p_1}-f^2_{p_2}f^2_{p_1}-f^2_{p_1}f^1_{p_1}-4f^2_{u_1}=0,\\
\end{aligned}
\end{equation}
\begin{equation}\label{Nuet2}
\begin{aligned}
&f^2_{p_1p_1p_1}=0, ~~~f^1_{p_2p_2p_2}=0,~~~f^2_{p_2p_2p_2}-3f^1_{p_1p_2p_2}=0,\\
&f^1_{p_1p_1p_1}-3f^2_{p_1p_1p_2}=0,~~~f^1_{p_1p_1p_2}-f^2_{p_1p_2p_2}=0,
\end{aligned}
\end{equation}
where $D_x$ is the total derivative.\end{theorem}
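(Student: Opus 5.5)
The plan is to prove the two directions separately. Conditions (\ref{Nuet1}) say that the generalized Wilczynski matrix $W$ vanishes, and conditions (\ref{Nuet2}) say that the trace-free part of the third $p$-derivatives of $f$ vanishes. For $n=2$, the three equations in (\ref{Nuet1}) are exactly, up to a factor $-4$, the entries $W^1_2$, $W^1_1-W^2_2$ and $W^2_1$ of the trace-free matrix $W=\Phi-\frac12(\mathrm{tr}\,\Phi)I$. Expanding $\Phi=A-\frac12D_xB+\frac14B^2$ componentwise confirms this; for instance $-4\Phi^1_2=2D_xf^1_{p_2}-f^1_{p_1}f^1_{p_2}-f^1_{p_2}f^2_{p_2}-4f^1_{u_2}$. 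Likewise (\ref{Nuet2}) are precisely the independent components of Fels' tensor $\tilde S^i_{jkl}=f^i_{|jkl}-\frac{3}{4}f^m_{|m(jk}\delta^i_{l)}$ for $n=2$. For example, $\tilde S^2_{111}=f^2_{p_1p_1p_1}$, and the component $\tilde S^1_{111}$ reduces to a multiple of $f^1_{p_1p_1p_1}-3f^2_{p_1p_1p_2}$. The proof therefore reduces to Fels' theorem quoted above, once we check that $\tilde P^i_j$ coincides with $W^i_j$ up to a nonzero factor and a sign convention (note $-\frac12\cdot$ vs $\frac12\frac{d}{dx}$, and $f$ vs $-f$ sign convention).

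\textbf{Necessity.} Both $W$ and the trace-free third-derivative tensor are relative invariants under point transformations, which Fels' Cartan reduction provides. Both vanish identically for $u''_i=0$, since then $A=B=0$ and all $p$-derivatives vanish. Hence they vanish for every equivalent system.

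\textbf{Sufficiency.} First, (\ref{Nuet2}) forces $f$ to be at most cubic in $p$ with the special projective structure of (\ref{cubic}). I would integrate the five PDEs to show that $f^i=\alpha^i(x,u,p)+p_i\,\gamma(x,u,p)$ with $\alpha^i$ quadratic and $\gamma$ quadratic in $p$. This is Sakka's form. Second, by Fels' theorem, $\tilde S=0$ and $\tilde P=0$ make all the structure functions of the $\{e\}$-structure vanish, because the remaining torsion is generated by these two via the structure equations (the Poincaré-lemma/Bianchi argument of Neut et al.). The coframe then has constant structure equations identical to those of the free particle system. Cartan's theorem on $\{e\}$-structures with constant structure functions gives a local diffeomorphism matching the coframes, and this descends to a point transformation.

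The main obstacle is the middle step. One must show that the vanishing of the $W$-components and the $\tilde S$-components alone kills all higher torsion, rather than only the lowest-order invariants. I would handle it by differentiating the structure equations, $d^2=0$, and showing that every remaining invariant is a linear combination of covariant derivatives of $W$ and $\tilde S$. This is the reduction from 88 invariants to 2 cited from \cite{R5}, which I would invoke rather than recompute.
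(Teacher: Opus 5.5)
Your proposal is correct and follows the route the paper indicates. The paper gives no proof of its own, only the citation to Neut et al.\ and the remark that (\ref{Nuet1}) and (\ref{Nuet2}) are equivalent to the vanishing of Fels' $\tilde P$ (indeed $\tilde P=-W$ for $n=2$) and $\tilde S$, so the theorem is Fels' result specialized to $n=2$; your component checks of both identifications are right.

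You should, however, say explicitly that your computation $-4\Phi^1_2=2D_xf^1_{p_2}-\cdots$ does not match the printed first equation, which has $D_xf^1_{p_1}$. The printed version is a typo: necessity fails for $u_i''=-xu_i'$, $i=1,2$, which is equivalent to $u_i''=0$ via $\bar x=\int_0^x e^{-s^2/2}ds$ but gives the value $-2$. Also, the middle equation equals $+4(W^1_1-W^2_2)$, not $-4$ times it.
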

 It is worth noting that the set of conditions (\ref{Nuet1}) is equivalent to the vanishing of $(\tilde{P}^{i}_{j})|_{e}$, while the set of conditions (\ref{Nuet2}) is equivalent to the vanishing of $(\tilde{S}^{i}_{jkl})_{|e}$ given by Fels \cite{p15}. Bagderina  established the general linearization criteria for a system of two 
 second-order ODEs by utilizing the direct method for the form (\ref{cubic}). She demonstrated that the linearizing transformation can be derived from the solution of a nonlinear system of 15 partial differential equations (PDEs), where some are cumbersome and need symbolic package such as Maple or Mathematica as she hinted in \cite[Remark 2]{p16}. Moreover, her work applies to the form (\ref{cubic}) without considering different canonical forms with five, six, seven, eight and fifteen dimensional Lie symmetry algebras.\\

In this work, we begin to address the problem suggested by Mahomed and Qadir by giving a linearization criteria for the other symmetry algebras. Therefore, we begin to solve the linearization problem for systems of two second-order ODEs admitting five, six, seven and eight Lie symmetry algebras, under invertible point transformations. We apply a new framework of Cartan's equivalence method by branching via relative invariants and introducing a chain of auxiliary functions to overcome the problem of swilling of expressions. This work provides a complete Cartan equivalence analysis of the linearization problem for systems of two second-order ODEs. The reduction procedure naturally gives rise to a branching classification, with each branch representing a distinct geometric class of systems. For every branch, we determine a canonical form, an invariant coframe, a complete characterization in terms of differential invariants, and a corresponding linearization theorem. Furthermore, we provide an explicit construction of the linearizing point transformation by solving a system of linear and Riccati PDEs together with illustrative examples. Since the analysis associated with each branch is substantial and largely independent of the others, we plan to present the complete classification as a series of self-contained manuscripts. Each manuscript will focus on a specific branch of the classification, while collectively they constitute a unified Cartan equivalence framework.\\

It is worth emphasizing that only one branch, namely the one in which the generalized Wilczynski invariant matrix has rank-zero and $\tilde{S}=O$, has been commonly studied in the literature. This branch characterizes the class of free particle equations. It seems that the remaining branches have not been systematically characterized within a complete Cartan equivalence framework. The present paper is devoted to the analysis of Branch I, characterized by  a rank-one generalized Wilczynski invariant matrix and the vanishing of the relative invariants $K_1,L_1$. This branch constitutes the first branch of the initial classification tree presented in Appendix A. In future work, we will investigate Branch II, which is characterized by a rank-one generalized Wilczynski invariant matrix together with the relative invariants $K_1=0,L_1\neq0$.\\

This work is organized as follows. Section 2 presents the mathematical background required for the subsequent analysis. In section 3, we apply Cartan's equivalence method to obtain an invariant characterization of the linearizable system whose associated generalized Wilczynski invariant matrix has rank-one. We consider the first branch where the relative invariants $K_1,L_1$ are identically zero, and derive the corresponding prolonged invariant coframe. This analysis yields necessary and sufficient conditions for a system of two second-order ODEs to belong to this branch. In section 4, we determine the canonical form for linear systems in this branch. Then we state the main result. Finally, in section 5, we develop a systematic procedure for constructing the linearizing map that reduces a linearizable system of two 
second-order ODEs to its canonical form and we illustrate the method with several examples.

\section{Preliminaries}
System (\ref{nsys}) naturally arises in the study of path geometries and projective differential structures and their classification under point transformations remains a central problem in the geometric theory of differential equations.\\

Under the invertible point transformation
\begin{equation}
\bar{x}=\xi(x,\mathbf{u}),\qquad 
\bar{\mathbf{u}}=\phi(x,\mathbf{u}),
\end{equation}
the generalized Wilczynski invariant matrix $W$ transforms into $\overline{\vphantom{x}W}$ as follows:
\begin{equation}
\overline{\vphantom{x}W}
=
\left(D_x \xi\right)^{-2} J W J^{-1},
\qquad
J=\phi_{\mathbf{u}}-\left(D_x \xi\right)^{-1}(D_x \phi)(\xi_{\mathbf{u}}),
\end{equation}
 where $$\phi=(\phi_1,\ldots,\phi_n)^T,~\phi_{\mathbf{u}}
\in GL(n,\mathbb{R}),~D_x \phi\in \mathbb{R}^{n\times1}, \xi_{\mathbf{u}}
\in \mathbb{R}^{1\times n} .$$
Consequently, the rank  of $W$ is invariant and the determinant is relative invariant under a point transformation. 
\begin{definition}
The system of second-order ODEs is called \emph{generic} if
\begin{equation}
\det W \neq 0,
\end{equation}
and \emph{non-generic} if
\begin{equation}
\det W = 0.
\end{equation}
\end{definition}

In the generic case, the generalized Wilczynski invariant matrix has full rank and defines a non-degenerate projective curvature endomorphism. In contrast, the 
non-generic case is characterized by a reduction of curvature rank, equivalently the existence of a nontrivial invariant direction in the associated Cartan geometry. This degenerate regime includes both the flat case $W=O$ and the intermediate case $W\neq O$ with $\det W=0$, which plays a distinguished role in symmetry enhancement and geometric reduction.\\

 In our work we restrict our attention to the case $W\neq O$  (equivalently rank$~W> 0$). In this case, $W$ is nonscalar matrix and we need the following theorem.
\begin{theorem}\cite{p47}\label{s1-thm1}
Every nonscalar matrix $A\in \mathbb{C}^{n\times n}$ is similar to a matrix with all entries different from zero.
\end{theorem}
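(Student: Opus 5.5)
The idea is to reduce to a convenient normal form and then perturb by an explicit similarity. First, since $A$ is nonscalar, it is not of the form $\lambda I$, so $A-\lambda I$ is nonzero for every $\lambda$. The property "all entries nonzero" is not affected by translation on the off-diagonal entries, but it is on the diagonal, so translation cannot be used directly. Instead I would show that the set of matrices similar to $A$ contains one with every entry nonzero by a genericity argument. The map $P\mapsto PAP^{-1}$ is a rational map on the irreducible variety $GL(n,\mathbb{C})$, which is a Zariski-dense open subset of $\mathbb{C}^{n\times n}$. Each entry $(PAP^{-1})_{ij}$ equals $(P\,A\,\operatorname{adj}P)_{ij}/\det P$, so its numerator is a polynomial $q_{ij}(P)$ in the entries of $P$. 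The union of the hypersurfaces $q_{ij}=0$ and $\det P=0$ is a proper closed set, provided that no $q_{ij}$ is identically zero. Irreducibility then gives a $P$ avoiding all of them. The whole proof therefore reduces to showing that, for each fixed pair $(i,j)$, some invertible $P$ makes $(PAP^{-1})_{ij}\neq 0$.

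\textbf{Off-diagonal entries} $i\neq j$. Since $A$ is nonscalar, some vector $v$ is not an eigenvector of $A$; otherwise every vector would be an eigenvector, which forces $A$ to be scalar. Hence $v$ and $Av$ are linearly independent. Choose a basis whose $j$-th vector is $v$ and whose $i$-th vector is $Av$, completing it arbitrarily. Let $Q$ be the matrix with these columns and take $P=Q^{-1}$. Then $PAP^{-1}e_j=P A v=e_i$, so $(PAP^{-1})_{ij}=1\neq 0$.

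\textbf{Diagonal entries} $i=j$. We need some $P$ with $(PAP^{-1})_{ii}\neq0$. If $\operatorname{tr}A\neq 0$, some diagonal entry of $A$ is nonzero, and a permutation similarity moves it to position $i$. If $\operatorname{tr}A=0$, then $A\neq0$ because $A$ is nonscalar, so $A$ has some nonzero entry. If that entry is diagonal we are done by permutation. If $a_{kl}\neq0$ with $k\neq l$, I would conjugate by the elementary matrix $E=I+te_le_k^{T}$ (add $t$ times row $k$ to row $l$). The $(l,l)$ entry of $EAE^{-1}$ equals $a_{ll}+t(a_{kl}-a_{ll})-t^2a_{kl}\cdots$, a nonconstant polynomial in $t$ with leading coefficient involving $a_{kl}\neq0$. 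It is therefore nonzero for suitable $t$, and a further permutation moves it to position $(i,i)$.

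Combining the two cases, every $q_{ij}$ is a nonzero polynomial, so the generic $P$ exists and $PAP^{-1}$ has all entries nonzero. The main obstacle I anticipate is the diagonal case when the trace is zero. There I must carefully compute the $(l,l)$ entry of $EAE^{-1}$, including the $-t^2a_{kl}$ term coming from $E^{-1}=I-te_le_k^T$, to confirm that it is a genuinely nonconstant polynomial in $t$. If the bookkeeping misbehaves, a fallback is to use a $2\times2$ rotation in the $(k,l)$ plane, where the diagonal entry becomes an explicit trigonometric polynomial with a nonzero coefficient.
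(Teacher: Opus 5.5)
The paper gives no proof of this theorem. It is taken from the cited reference and used only in Remark~\ref{rm10}; in the $2\times 2$ rank-one case the paper actually relies on the explicit map (\ref{tr1}). Your argument is therefore a self-contained alternative, and it is correct.

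The reduction is sound. For invertible $P$ one has $q_{ij}(P)=\det P\,(PAP^{-1})_{ij}$, so a single invertible $P_0$ with $(P_0AP_0^{-1})_{ij}\neq 0$ gives $q_{ij}\not\equiv 0$. Then $\det P\prod_{i,j}q_{ij}$ is a nonzero polynomial and has a non-root. Your off-diagonal construction ($v$ in slot $j$, $Av$ in slot $i$) and the permutation steps in the diagonal case are fine.

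Compared with simply citing the result, your route has three advantages. It is short, and it shows that a generic $P$ works. It also uses nothing about $\mathbb{C}$ beyond the field being infinite, so it holds verbatim over $\mathbb{R}$. The real version is what the paper actually needs: $W$ is real, and the similarity in Remark~\ref{rm10} must come from a real change of variables such as $\bar{\mathbf{u}}=C\mathbf{u}$, whereas the theorem as quoted is over $\mathbb{C}$. What your route does not give is an explicit conjugating matrix. For that one must build $P$ by hand, as the paper does with (\ref{tr1}).

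One computational slip: the $(l,l)$ entry of $EAE^{-1}$ is not the expression you wrote. With $N=e_le_k^{T}$ and $k\neq l$ one has $N^2=0$, so $E^{-1}=I-tN$ and $EAE^{-1}=A+t(NA-AN)-t^2NAN$. Here $(NA)_{ll}=a_{kl}$ and $(AN)_{ll}=(NAN)_{ll}=0$, so $(EAE^{-1})_{ll}=a_{ll}+t\,a_{kl}$. This is exactly linear in $t$ with leading coefficient $a_{kl}\neq 0$; the $-t^2a_{kl}$ term lands in the $(l,k)$ entry. Your conclusion stands, and the rotation fallback is unnecessary.
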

\begin{remark}\label{rm10}
Theorem \ref{s1-thm1} implies that any nonzero generalized Wilczynski
invariant matrix $W$ is similar to a matrix $\overline{\vphantom{x}W}$ with all entries different form zero. Hence, any system can be reduced to an equivalent one whose associated generalized Wilczynski
invariant matrix $W$ has nonzero entries. Hence, without loss of generality, we consider the case in which $W$ has nonzero entries.
\end{remark}

\begin{theorem}
The canonical Laguerre-Forsyth form of the system of linear second-order ODEs
\begin{equation}\label{131}
\mathbf{u}''+A(x)\mathbf{u}'+B(x)\mathbf{u}=0,
\end{equation}
where
$\mathbf{u}=
(u_1, u_2,\dots,u_n),
A(x),B(x)\in M_{n}(\mathbb{C})$, is 
\begin{equation}
\mathbf{U}''+K(x)\mathbf{U}=0, ~~~\text{tr}(K)=0
\end{equation}
and
$\mathbf{U}=
(U_1, U_2,\dots,U_n),
K(x)\in M_{n}(\mathbb{C})$. 
\end{theorem}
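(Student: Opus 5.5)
The reduction will be carried out by two successive transformations that preserve linearity: a change of the dependent variables $\mathbf{u}=P(x)\mathbf{v}$ with $P(x)\in GL(n,\mathbb{C})$ to kill the first-derivative term, followed by a combined change $x\mapsto t=\lambda(x)$, $\mathbf{v}=\eta(x)\mathbf{U}$ with scalar $\eta$ to make the remaining coefficient trace-free. This is the classical Laguerre--Forsyth procedure for scalar equations, adapted to matrices.

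\textbf{Step 1: removing the first-derivative term.} Substituting $\mathbf{u}=P\mathbf{v}$ into (\ref{131}) gives
\[
P\mathbf{v}''+(2P'+AP)\mathbf{v}'+(P''+AP'+BP)\mathbf{v}=0.
\]
Choose $P$ as a fundamental solution of the linear matrix ODE $P'=-\tfrac12 A P$ with $P(x_0)=I$. This $P$ exists, is smooth, and stays invertible (Liouville: $\det P=\exp(-\tfrac12\int \mathrm{tr}A)\neq0$). Multiplying by $P^{-1}$, the system becomes $\mathbf{v}''+\tilde K(x)\mathbf{v}=0$, where
\[
\tilde K=P^{-1}\bigl(B-\tfrac12 A'-\tfrac14A^2\bigr)P.
\]
This step is purely algebraic given the existence of $P$.

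\textbf{Step 2: making the coefficient trace-free.} Starting from $\mathbf{v}''+\tilde K\mathbf{v}=0$, set $t=\lambda(x)$ and $\mathbf{v}=\eta(x)\mathbf{U}$ with scalar $\eta$. Because $\eta$ is scalar it commutes with everything, so each component behaves as in the scalar computation. The new first-derivative coefficient is proportional to $\lambda''\eta+2\lambda'\eta'$. Taking $\eta=(\lambda')^{-1/2}$ makes it vanish and leaves
\[
\mathbf{U}_{tt}+(\lambda')^{-2}\Bigl(\tilde K+\tfrac12\{\lambda,x\}I\Bigr)\mathbf{U}=0,
\]
where $\{\lambda,x\}$ denotes the Schwarzian derivative. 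Consequently
\[
\mathrm{tr}\,K=(\lambda')^{-2}\bigl(\mathrm{tr}\tilde K+\tfrac n2\{\lambda,x\}\bigr).
\]
The requirement $\mathrm{tr}\,K=0$ becomes $\{\lambda,x\}=-\tfrac{2}{n}\mathrm{tr}\tilde K$. This is solved in the standard way: take $\lambda=y_1/y_2$, where $y_1,y_2$ are independent solutions of the scalar equation $y''+\tfrac1n(\mathrm{tr}\tilde K)\,y=0$. Then $\lambda'=W(y_1,y_2)/y_2^2\neq0$ locally, so the transformation is invertible.

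\textbf{Main obstacle.} The delicate point is the bookkeeping in Step 2: verifying the exact constant in front of the Schwarzian, and checking that the scalar gauge $\eta$ shifts $\tilde K$ only by a multiple of $I$. The latter is what lets the trace condition decouple into a single scalar Schwarzian equation. I would first verify the scalar case $n=1$ directly, which is the classical Laguerre--Forsyth form with $K=0$, and then observe that the matrix case differs only by replacing the scalar coefficient by $\tilde K$.

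Finally, composing the two maps gives an invertible point transformation
\[
t=\lambda(x),\qquad \mathbf{U}=(\lambda')^{1/2}P^{-1}\mathbf{u},
\]
which carries (\ref{131}) to $\mathbf{U}''+K\mathbf{U}=0$ with $\mathrm{tr}\,K=0$. All statements are local, near a point where the ODE solutions are defined and $y_2\neq0$.
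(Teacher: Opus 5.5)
Your reduction is the same two-step argument as the paper's. First the gauge $\mathbf{u}=P\mathbf{v}$ with $2P'+AP=0$ gives $\tilde K=P^{-1}(B-\frac12A'-\frac14A^2)P$. Then $t=\lambda(x)$, $\mathbf{v}=(\lambda')^{-1/2}\mathbf{U}$, followed by a Schwarzian condition on $\lambda$.

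However, the constant you flagged as delicate has the wrong sign. With $\eta=(\lambda')^{-1/2}$ one gets $\eta''/\eta=-\frac12\{\lambda,x\}$; for example, $\lambda=e^x$ gives $\eta''/\eta=\frac14$ while $\{\lambda,x\}=-\frac12$. So the transformed coefficient is $(\lambda')^{-2}\bigl(\tilde K-\frac12\{\lambda,x\}I\bigr)$, exactly as in the paper, and the trace condition is $\{\lambda,x\}=+\frac2n\,\mathrm{tr}\,\tilde K$.

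Your recipe $\lambda=y_1/y_2$ with $y''+\frac1n(\mathrm{tr}\,\tilde K)y=0$ gives $\{\lambda,x\}=2q=\frac2n\,\mathrm{tr}\,\tilde K$. It therefore does not solve the equation you wrote, but it does solve the correct one: your two sign slips cancel. Taken literally, your displayed formulas would give $\mathrm{tr}\,K=2(\lambda')^{-2}\,\mathrm{tr}\,\tilde K\neq0$. The $n=1$ check you proposed would have caught this, since your formula yields $K=2(\lambda')^{-2}\tilde K$ instead of $0$. Once you fix the sign in the transformed coefficient and in the Schwarzian equation, the proof is complete.

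Compared with the paper's write-up, yours is more careful in two places:
\begin{itemize}
\item You use the correct general-$n$ normalization $\frac1n\,\mathrm{tr}$. The paper's splitting $C=\widetilde C+\frac12(\mathrm{tr}\,C)I$ is only valid for $n=2$. Its stated condition $g+\mathrm{tr}(C)=0$ also has the opposite sign to what its final $K=(\xi')^{-2}\bigl(C-\frac12(\mathrm{tr}\,C)I\bigr)$ requires, namely $g=\mathrm{tr}\,C$.
\item You actually solve the Schwarzian equation and justify invertibility of $P$ (via Liouville) and of $t=\lambda(x)$, which the paper simply asserts.
\end{itemize}
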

\begin{proof}
Consider the linear system (\ref{131}). Then the transformation
\begin{equation}\label{130}
\mathbf{u}=P(x)\mathbf{z},
\end{equation}
where $P(x)$ is an invertible matrix, satisfies
\begin{equation}
2P'+AP=0,
\end{equation}
  reduces system (\ref{131}) to 
\begin{equation}\label{132}
\mathbf{z}''+C(x)\mathbf{z}=0,
\end{equation}
where $C(x)$ is given by
\begin{equation}
C=
P^{-1}
\left(
B-\frac12A'
-\frac14A^2
\right)P.
\end{equation}
Now, we introduce a change of independent variable
\begin{equation}
\bar{x}=\xi(x),
\qquad
\xi'(x)\neq0,
\end{equation}
together with the transformation
\begin{equation}
\textbf{z}=h(x)\textbf{U}(\bar{x}),
\end{equation}
where $h=(\xi')^{-1/2}$. Using $
\frac{d}{dx}=\xi'(x)\frac{d}{d\bar{x}},$ system (\ref{132}) reduces to 
\begin{equation}
\mathbf{U}_{\bar{x}\bar{x}}
+
(\xi')^{-2}
\left(
C-\frac12g(\bar{x})I
\right)\mathbf{U}=0,
\end{equation}
where
\begin{equation}
g
=
\frac{\xi'''}{\xi'}-\frac32
\left(
\frac{\xi''}{\xi'}
\right)^2
\end{equation}
is the Schwarzian derivative. Now, we decompose $C$ into trace and traceless parts as
\begin{equation}
C=
\widetilde{C}
+
\frac12(\operatorname{tr}C)I,
\qquad
\operatorname{tr}(\widetilde{C})=0,
\end{equation}
and then we choose $\xi$ such that
\begin{equation}
g
+
\operatorname{tr}(C)
=0.
\end{equation}

Hence, the scalar part disappears and the system becomes
\begin{equation}
\mathbf{U}''+K(x)\mathbf{U}=0,
\end{equation}
where $K(x)$ is given by
\begin{equation}
K
=
(\xi')^{-2}
\left(
C-\frac12(\operatorname{tr}C)I
\right),
\qquad
\operatorname{tr}(K)=0.
\end{equation}
This completes the proof.
\end{proof}

Herein, we consider the following Laguerre-Forsyth canonical  form for linear system of two second-order ODEs
\begin{equation}\label{8}
u_1^{\prime\prime}=m_{11}(x)u_1+m_{12}(x)u_2,~~~u_2^{\prime\prime}=m_{21}(x)u_1-m_{11}(x)u_2.
\end{equation}
 where the associated generalized  Wilczynski invariant matrix $W$ is  
\begin{equation}\label{170}
W=\begin{pmatrix}
m_{11}(x)&m_{12}(x)\\
m_{21}(x)&-m_{11}(x)
\end{pmatrix}.
\end{equation}
As we assumed $W$ has all entries different from zero, the coefficient functions $m_{11},m_{12},m_{21}$ are assumed to be nonvanishing on their domain and the corresponding system (\ref{8}) consists of fully coupled ODEs through the dependent variables $u_1, u_2$.\\

Consequently, we study the equivalence to the canonical form (\ref{8}) where the coefficient functions $m_{11},m_{12},m_{21}$ do not vanish. We start with the non-generic case where rank $W=1$, equivalently $m_{11}^2+m_{12}m_{21}=0$.

\section{Application of Cartan equivalence method}
	Let $(x,u_{1},u_{2},p_{1},p_{2})\in \mathbb{R}^{5} $, where $p_{1}=u^{\prime}_1$ and $p_{2}=u^{\prime}_2$, be the standard coordinates on the first Jet space $\textbf{J}^{1}(\mathbb{R},\mathbb{R}^{2})$. 
Consider the system of two second-order ODEs
\begin{equation}\label{NL}
u^{\prime\prime}_{1}=f^1(x,u_1,u_2,p_1,p_2),~~u^{\prime\prime}_{2}=f^2(x,u_1,u_2,p_1,p_2),~~
\end{equation}
where $f^1,f^2$ are smooth functions defined on an open subset $M$ of $\textbf{J}^{1}(\mathbb{R},\mathbb{R}^{2})$. Then the associated adapted coframe $\omega$ is given by
\begin{equation}\label{21}
\begin{pmatrix}
\omega_1\\
\omega_2\\
\omega_3\\
\omega_4\\
\omega_5
\end{pmatrix}=\Omega\begin{pmatrix}
du_1-p_1dx\\
du_2-p_2dx\\
dp_1-f^1dx\\
dp_2-f^2dx\\
dx
\end{pmatrix},
\end{equation}
where the invertible matrix $\Omega$ is defined as
\begin{equation}\label{87}
\Omega=
\begin{pmatrix}
1&0&0&0&0\\
0&1&0&0&0\\
I_1&I_2&1&0&0\\
I_3&I_4&0&1&0\\
0&0&0&0&1
\end{pmatrix},
\end{equation}
and 
\begin{equation}\label{106}
I_1=-\frac{1}{2}f^1_{p_1},~~ I_2=-\frac{1}{2}f^1_{p_2}, ~~I_3=-\frac{1}{2}f^2_{p_1},~~ I_4=-\frac{1}{2}f^2_{p_2}.
\end{equation}
The adapted coframe (\ref{21}) was used by Chern in \cite{R3} and by Fels in \cite{p15}. The equivalence problem of system (\ref{NL}) and the system 
\begin{equation}\label{S2}
\bar{u}^{\prime\prime}_{1}=\bar{f}^1(\bar{x},\bar{u}_1,\bar{u}_2,\bar{p}_1,\bar{p}_2),~~\bar{u}^{\prime\prime}_{2}=\bar{f}^2(\bar{x},\bar{u}_1,\bar{u}_2,\bar{p}_1,\bar{p}_2),
\end{equation}
  under an invertible point transformation
\begin{equation}\label{T1}
\Phi:~\bar{x}=\xi(x,u_1,u_2),~~\bar{u}_1=\phi_1(x,u_1,u_2),~~\bar{u}_2=\phi_2(x,u_1,u_2),
\end{equation}
can be reformulated in terms of the associated adapted coframes $\omega$ and $\bar{\omega}$ as 
\begin{equation}
\Phi^*
\begin{pmatrix}
\bar{\omega}_1\\
\bar{\omega}_2\\
\bar{\omega}_3\\
\bar{\omega}_4\\
\bar{\omega}_5
\end{pmatrix}=\begin{pmatrix}
a_1&a_2&0&0&0\\
a_3&a_4&0&0&0\\
a_5&a_6&a_7&a_8&0\\
a_9&a_{10}&a_{11}&a_{12}&0\\
a_{13}&a_{14}&0&0&a_{15}\\
\end{pmatrix}\begin{pmatrix}
\omega_1\\
\omega_2\\
\omega_3\\
\omega_4\\
\omega_5
\end{pmatrix},
\end{equation}
where $\Phi^*$ is the pullback of the first prolongation of the equivalence map $\Phi$. The auxiliary functions $a_i=a_i(x,u_1,u_2,p_1,p_2),i\in\{1,2,3,4,15\}$ can be evaluated, explicitly as 
\begin{equation}\label{as}
\begin{aligned}
&a_1=\phi_{1u_1}-\xi_{u_1}\frac{D_x\phi_1}{D_x\xi},~~a_2=\phi_{1u_2}-\xi_{u_2}\frac{D_x\phi_1}{D_x\xi},~~a_3=\phi_{2u_1}-\xi_{u_1}\frac{D_x\phi_2}{D_x\xi},\\
&a_4=\phi_{2u_2}-\xi_{u_2}\frac{D_x\phi_2}{D_x\xi}, ~~a_{15}=D_x\xi.
\end{aligned}\end{equation}
The associated structure group $G$ is defined as
\begin{equation}\label{G}
 G=\left\{\left.\begin{pmatrix}
a_1&a_2&0&0&0\\
a_3&a_4&0&0&0\\
a_5&a_6&a_7&a_8&0\\
a_9&a_{10}&a_{11}&a_{12}&0\\
a_{13}&a_{14}&0&0&a_{15}\\
\end{pmatrix}\right|~~ a_{15}
\begin{vmatrix}
a_1&a_2\\
a_3&a_4
\end{vmatrix}
\begin{vmatrix}
a_7&a_8\\
a_{11}&a_{12}
\end{vmatrix}\neq0\right\}.
\end{equation}\\
One can introduce the lifted coframe $\theta=(\theta_1, \theta_2, \theta_3,\theta_4, \theta_5)$ as 
\begin{equation}\label{S3}
\theta=S\omega,
\end{equation}
where $S:M\rightarrow G$ is a smooth function on $M$ taking values in the Lie subgroup $G$ of $GL(5,\mathbb{R})$.\\

After absorption, we have the following structure equations
\begin{equation}
d\begin{pmatrix}
\theta_1\\
\theta_2\\
\theta_3\\
\theta_4\\
\theta_5
\end{pmatrix}
=\begin{pmatrix}
\alpha_1&\alpha_2&0&0&0\\
\alpha_3&\alpha_4&0&0&0\\
\alpha_5&\alpha_6&\alpha_7&\alpha_8&0\\
\alpha_9&\alpha_{10}&\alpha_{11}&\alpha_{12}&0\\
\alpha_{13}&\alpha_{14}&0&0&\alpha_{15}\\
\end{pmatrix}\wedge
\begin{pmatrix}
\theta_1\\
\theta_2\\
\theta_3\\
\theta_4\\
\theta_5
\end{pmatrix}+\begin{pmatrix}
T^1_{35}\theta_3\wedge\theta_5+T^1_{45}\theta_4\wedge\theta_5\\
T^2_{35}\theta_3\wedge\theta_5+T^2_{45}\theta_4\wedge\theta_5\\
0\\
0\\
0\\
\end{pmatrix},
\end{equation}
where
\begin{equation}
\begin{aligned}
&T^1_{35}=\frac{a_1a_{12}-a_2a_{11}}{(a_8a_{11}-a_7a_{12})a_{15}},~T^1_{45}=\frac{a_2a_{7}-a_1a_{8}}{(a_8a_{11}-a_7a_{12})a_{15}},\\
&T^2_{35}=\frac{a_3a_{12}-a_4a_{11}}{(a_8a_{11}-a_7a_{12})a_{15}},~T^2_{45}=\frac{a_4a_{7}-a_3a_{8}}{(a_8a_{11}-a_7a_{12})a_{15}},\\
\end{aligned}
\end{equation}
 $\alpha_1,...,\alpha_{15}$ are the modified Maurer-Cartan forms and $\wedge$ is the wedge product operation.\\
 
We normalize the torsions $T^1_{35}=-1, T^1_{45}=0,  T^2_{35}=0, T^2_{45}=-1$ by setting
\begin{equation}
a_7=\frac{a_1}{a_{15}},~a_8=\frac{a_2}{a_{15}},~a_{11}=\frac{a_3}{a_{15}},~a_{12}=\frac{a_4}{a_{15}}.
\end{equation} 

This normalization reduces G to the 11-dimensional group $G_1$ by incorporating the values of $a_7,a_8,a_{11},a_{12}$ and yields an adapted coframe of the form (\ref{S3}), where $S:M\rightarrow G_1$.\\

The structure equations obtained after absorption in the second loop are
\begin{equation}
d\begin{pmatrix}
\theta_1\\
\theta_2\\
\theta_3\\
\theta_4\\
\theta_5
\end{pmatrix}
=\begin{pmatrix}
\alpha_1&\alpha_2&0&0&0\\
\alpha_3&\alpha_4&0&0&0\\
\alpha_5&\alpha_6&\alpha_1-\alpha_{11}&\alpha_2&0\\
\alpha_7&\alpha_{8}&\alpha_{3}&\alpha_{4}-\alpha_{11}&0\\
\alpha_{9}&\alpha_{10}&0&0&\alpha_{11}\\
\end{pmatrix}\wedge
\begin{pmatrix}
\theta_1\\
\theta_2\\
\theta_3\\
\theta_4\\
\theta_5
\end{pmatrix}+\begin{pmatrix}
-\theta_3\wedge\theta_5\\
-\theta_4\wedge\theta_5\\
T^3_{45}\theta_4\wedge\theta_5\\
T^4_{35}\theta_3\wedge\theta_5+T^4_{45}\theta_4\wedge\theta_5\\
0\\
\end{pmatrix},
\end{equation}
where 
\begin{equation}
T^3_{45}=\frac{2(a_2a_5-a_1a_6)}{a_1a_4-a_2a_3},~~T^4_{35}=\frac{2(a_3a_{10}-a_4a_9)}{a_1a_4-a_2a_3},~~T^4_{45}=-\frac{2(a_1a_{10}-a_2a_9+a_3a_6-a_4a_5)}{a_1a_4-a_2a_3}.
\end{equation}
Assume $a_1\neq0$ (see Remark (\ref{a1}) for the case $a_1= 0$). Vanishing the unbsorbable torsions $T^3_{45}, ~T^4_{35},~ T^4_{45}$  results in
\begin{equation}
a_6=\frac{a_2a_5}{a_1},~a_9=\frac{a_3a_5}{a_1},~a_{10}=\frac{a_4a_5}{a_1},
\end{equation}
and reduces $G_1$ to the 8-dimensional group $G_2$ by incorporating the values of $a_6, a_9, a_{10}$ in $G_1$.\\

In the third loop, the lifted coframe (\ref{S3}) where $S:M\rightarrow G_2$ has the structure equations, in the absorbed form,
\begin{equation}
d\begin{pmatrix}
\theta_1\\
\theta_2\\
\theta_3\\
\theta_4\\
\theta_5
\end{pmatrix}
=\begin{pmatrix}
\alpha_1&\alpha_2&0&0&0\\
\alpha_3&\alpha_4&0&0&0\\
\alpha_5&0&\alpha_1-\alpha_{8}&\alpha_2&0\\
0&\alpha_{5}&\alpha_{3}&\alpha_{4}-\alpha_{8}&0\\
\alpha_{6}&\alpha_{7}&0&0&\alpha_{8}\\
\end{pmatrix}\wedge
\begin{pmatrix}
\theta_1\\
\theta_2\\
\theta_3\\
\theta_4\\
\theta_5
\end{pmatrix}+\begin{pmatrix}
-\theta_3\wedge\theta_5\\
-\theta_4\wedge\theta_5\\
T^3_{25}\theta_2\wedge\theta_5\\
T^4_{15}\theta_1\wedge\theta_5+T^4_{25}\theta_2\wedge\theta_5\\
0\\
\end{pmatrix},
\end{equation}
where the invariants
\begin{equation}
\begin{aligned}
&T^3_{25}=\frac{J_1a^2_1+2J_2a_1a_2-J_3a^2_2}{(a_1a_4-a_2a_3)a^2_{15}},\\
& T^4_{15}=-\frac{J_1a^2_3+2J_2a_3a_4-J_3a^2_4}{(a_1a_4-a_2a_3)a^2_{15}}, ~T^4_{25}=\frac{2J_1a_1a_3+2J_2(a_1a_4+a_2a_3)-2J_3a_2a_4}{(a_1a_4-a_2a_3)a^2_{15}},\\
\end{aligned}
\end{equation}
are given in terms of
\begin{equation}\label{E40}
\begin{aligned}
&J_1=-f^1_{u_2}-I_2(I_1+I_4)-D_xI_2,\\
&J_2=\frac{1}{2}(f^1_{u_1}-f^2_{u_2}+I_1^2-I_4^2+D_x(I_1-I_4)),\\
&J_3=-f^2_{u_1}-I_3(I_1+I_4)-D_xI_3,\\
\end{aligned}
\end{equation}
 where $D_x=\frac{\partial}{\partial x}+p_1\frac{\partial}{\partial u_1}+p_2\frac{\partial}{\partial u_2}+f^1\frac{\partial}{\partial p_1}+f^2\frac{\partial}{\partial p_2}$. It is worth noting that $J_1,J_2,J_3$ constitute the entries of the generalized Wilczynski invariant matrix $W$ associated with system (\ref{NL}) as follows
\begin{equation}\label{W}
W=\begin{pmatrix}
J_2&-J_1\\
-J_3&-J_2\\
\end{pmatrix}.
\end{equation}
 Neut also presented, in Theorem \ref{Nuet3}, that vanishing of $J_1,J_2,J_3$ is a necessary condition for local equivalence to the free particle equations. 
 Moreover, 
\begin{equation}
\begin{pmatrix}
\frac{1}{2}T^4_{25}&-T^3_{25}\\
-T^4_{15}&-\frac{1}{2}T^4_{25}
\end{pmatrix}=\frac{1}{a_{15}^2}
\begin{pmatrix}
a_1&a_2\\
a_3&a_4
\end{pmatrix}\begin{pmatrix}
J_2&-J_1\\
-J_3&-J_2\\
\end{pmatrix}\begin{pmatrix}
a_1&a_2\\
a_3&a_4
\end{pmatrix}^{-1},
\end{equation}
and $T^3_{25}T^4_{15}+\frac{1}{4}(T^4_{25})^2=\frac{J_1J_3+J_2^2}{a^4_{15}}$.
This coincides with the fact that the determinant of $W$ is a relative invariant under point transformation, hence we introduce $J_4$ as
\begin{equation}\label{139}
J_4^4=J_1J_3+J_2^2=-\det W.
\end{equation}
As we exclude the trivial case $W=O$, we proceed through the following branch.\\

\textbf{Main branch: rank $W=1$}\\

In this branch the generalized Wilczynski invariant matrix $W$ is singular ($J_4=0$). By assumption, $W$ has nonzero entries, therefore the torsions $T^4_{15}=1,~T^4_{25}=0$ can be normalized by setting 
\begin{equation}
a_2=\frac{J_2}{J_3}a_1,~a_4=\frac{a_1a_{15}^2+J_2a_3}{J_3},
\end{equation}
that reduces $G_2$ to the six-dimensional group $G_3$ by incorporating the values of $a_2, a_4$ in $G_2$.\\

In the fourth loop, the lifted coframe (\ref{S3}) where $S:M\rightarrow G_3$ has the following structure equations, after absorption, 
\begin{equation}
\begin{aligned}
&d\begin{pmatrix}
\theta_1\\
\theta_2\\
\theta_3\\
\theta_4\\
\theta_5
\end{pmatrix}
=\begin{pmatrix}
\alpha_6-2\alpha_5&0&0&0&0\\
\alpha_1&\alpha_6&0&0&0\\
\alpha_2&0&\alpha_{6}-3\alpha_5&0&0\\
0&\alpha_{2}&\alpha_{1}&\alpha_{6}-\alpha_{5}&0\\
\alpha_{4}&\alpha_{3}&0&0&\alpha_{5}\\
\end{pmatrix}\wedge
\begin{pmatrix}
\theta_1\\
\theta_2\\
\theta_3\\
\theta_4\\
\theta_5
\end{pmatrix}\\
&~~~~~~~+\begin{pmatrix}
\theta_2\wedge(T^1_{23}\theta_3+T^1_{24}\theta_4+T^1_{25}\theta_5)-\theta_3\wedge\theta_5\\
-\theta_4\wedge\theta_5\\
T^3_{24}\theta_2\wedge\theta_4+T^3_{34}\theta_3\wedge\theta_4+T^3_{35}\theta_3\wedge\theta_5+T^1_{25}\theta_4\wedge\theta_5\\
\theta_1\wedge\theta_5+T^4_{24}\theta_2\wedge\theta_4+T^4_{34}\theta_3\wedge\theta_4+T^3_{35}\theta_4\wedge\theta_5\\
-T^4_{34}\theta_3\wedge\theta_5+(T^1_{23}+T^3_{34})\theta_4\wedge\theta_5\\
\end{pmatrix},
\end{aligned}
\end{equation}
where the invariants
\begin{equation}
\begin{aligned}
&T^1_{23}=\frac{J_2a_{13}-J_3a_{14}}{a_1a_{15}^2}-\frac{K_3}{a_1a_{15}}~~\text{mod}~T^1_{24},
~~~~~~T^1_{24}=\frac{K_2}{a_1a_{15}^3},~~~~~~T^1_{25}=\frac{K_1^3}{a_{15}^3},\\
&T^3_{24}=-\frac{(J_2a_{13}-J_3a_{14})K_1^3}{a_1a_{15}^5}+\frac{K_6}{a_1a_{15}^4}~~\text{mod}~T^1_{24},\\
&T^3_{34}=\frac{J_2a_{13}-J_3a_{14}}{2a_1a_{15}^2}+\frac{K_5}{2a_1a_{15}}~~\text{mod}~T^1_{24},~~~T^3_{35}=\frac{K_1^3a_3}{a_1a_{15}^3}+\frac{K_4}{a_{15}}-\frac{2a_5}{a_1},\\
&T^4_{24}=\frac{2(J_2a_{13}-J_3a_{14})(2a_5a_{15}-K_4a_1)+(3K_3+2K_5)a_5}{a_1^2a_{15}^3}+\frac{K_1^3a_{13}}{a_1a_{15}^3}+\frac{K_8}{a_1a_{15}^2}
~~\text{mod }~T^3_{24},\\
&T^4_{34}=\frac{2K_7a_{15}+a_{13}}{2a_1}~~\text{mod}~(T^1_{24},T^3_{34}),\\
\end{aligned}
\end{equation}
are given in term of $I_1,I_2,I_3,I_4,J_1,J_2,J_3$ given in (\ref{106}), (\ref{E40}) and their derivatives as follows
\begin{equation}\label{E41}
\begin{aligned}
&K_1^3=\frac{I_2J_3^2-(I_1-I_4)J_2J_3-I_3J_2^2}{J_3}-J_3D_x\bigg(\frac{J_2}{J_3}\bigg),~~~K_2=J_2J_3\bigg(\frac{J_2}{J_3}\bigg)_{p_1}-J_3^2\bigg(\frac{J_2}{J_3}\bigg)_{p_2},\\
&K_3=-J_3\bigg(\frac{J_2}{J_3}\bigg)_{p_1},~~~K_4=-\frac{(I_1-I_4)J_3+2I_3J_2-D_xJ_3}{2J_3},~~~K_5=\frac{J_3(2J_{2p_1}+J_{3p_2})-3J_2J_{3p_1}}{2J_3},~~~\\
&K_6=\frac{J_2^2J_3(I_{4p_1}-I_{1p_1})+J_2J_3^2(I_{1p_2}+I_{2p_1})-J_2^3I_{3p_1}-J_3^3I_{2p_2}}{J_3}+I_1J_2J_3\bigg(\frac{J_2}{J_3}\bigg)_{p_1}-I_2J_3^2\bigg(\frac{J_2}{J_3}\bigg)_{p_1}\\
&-J_3^2\bigg(\frac{I_4J_2}{J_3}\bigg)_{p_2}+J_2J_3\bigg(\frac{I_3J_2}{J_3}\bigg)_{p_2}-J_2J_3\bigg(\frac{J_2}{J_3}\bigg)_{u_1}+J_3^2\bigg(\frac{J_2}{J_3}\bigg)_{u_2},~~~K_7=-\frac{J_{3p_1}}{2J_3},~~~\\
&K_8=-\frac{J_3(I_1J_{2p_1}+I_3J_{2p_2}-J_3I_{1p_2}-2J_2I_{3p_2}-J_{2u_1}+J_{3u_2}-2J_2I_{3p_2})+J_2^2I_{3p_1}}{J_3}\\
&-J_3^2\bigg(\frac{I_2}{J_3}\bigg)_{p_1}-J_3^2\bigg(\frac{I_4}{J_3}\bigg)_{p_2}.\\
\end{aligned}
\end{equation}
We note that the relative invariant $K_2$ vanishes for the canonical form (\ref{8}).  Moreover, we have
\begin{equation}
\label{eq:array}
\left.
\begin{array}{l}
T^4_{24}-2T^1_{25}T^4_{34}+4T^3_{34}T^3_{35}=\frac{(2K_1^3K_5+K_6)a_3}{a_1^2a_{15}^4}+\frac{3(K_3-2K_5)a_5}{a_1^2a_{15}}-\frac{2K_1^3K_7-4K_4K_5-K_8}{a_1a_{15}^2}
\end{array}
\right.\text{mod}~T^1_{24}.
\end{equation} So $K_2$, $2K_1^3K_5+K_6$, $K_3-2K_5$ and $2K_1^3K_7-4K_4K_5-K_8$ form system of relative invariants and they are all identically zero for the canonical form (\ref{8}). Therefore, it belongs to the sub-branch 
\begin{equation}
K_2=0,~~K_3=2K_5,~~K_6=-2K_1^3K_5,~~K_8=2K_1^3K_7-4K_4K_5.
\end{equation}
  While at the canonical form (\ref{8}), the relative invariant 
\begin{equation}
K_1=\bigg(\frac{m_{11}m_{21}^{\prime}-m_{11}^{\prime}m_{21}}{m_{21}}\bigg)^{(1/3)}.
\end{equation}
 Thus, $K_1$ depends on the canonical form and we consider the following sub-branch.\\

\subsection{ Sub-branch $K_1=0$}

Normalizing the torsions $ T^3_{34}=0, T^3_{35}=0, T^4_{34}=0$ by setting 
\begin{equation}
a_5=\frac{K_4a_1}{2a_{15}}, ~a_{13}=-2K_7a_{15}, ~a_{14}=-\frac{2(J_2K_7-K_5)a_{15}}{J_3}.
\end{equation}
 reduces $G_3$ to the three-dimensional group $G_4$ by incorporating the values of $a_5, a_{13}, a_{14}$ in $G_3$ and yields the lifted coframe (\ref{S3}) where $S:M\rightarrow G_4$.\\

In the fifth loop, the structure equations, after absorption, are
\begin{equation}
\begin{aligned}
&d\begin{pmatrix}
\theta_1\\
\theta_2\\
\theta_3\\
\theta_4\\
\theta_5
\end{pmatrix}
=\begin{pmatrix}
\alpha_1-2\alpha_3&0&0&0&0\\
\alpha_2&\alpha_1&0&0&0\\
0&0&\alpha_1-3\alpha_3&0&0\\
0&0&\alpha_{2}&\alpha_1-\alpha_3&0\\
0&0&0&0&\alpha_3\\
\end{pmatrix}\wedge
\begin{pmatrix}
\theta_1\\
\theta_2\\
\theta_3\\
\theta_4\\
\theta_5
\end{pmatrix}\\
&~~~~~~~+\begin{pmatrix}
-\theta_3\wedge\theta_5\\
-\theta_4\wedge\theta_5\\
\theta_1\wedge(T^3_{12}\theta_2+T^3_{14}\theta_4+T^3_{15}\theta_5)\\
\theta_1\wedge(T^4_{12}\theta_2+\theta_5)+\theta_2\wedge(T^4_{13}\theta_3+T^3_{14}\theta_4+T^3_{15}\theta_5)\\
\sum^5_{i=2}T^5_{1i}\theta_1\wedge\theta_i+\sum^5_{i=3}T^5_{2i}\theta_2\wedge\theta_i\\
\end{pmatrix},
\end{aligned}
\end{equation}
where the invariants 
\begin{equation}
\begin{aligned}
&T^3_{12}=\frac{L_3}{a_1a_{15}^3},~~~
T^3_{14}=\frac{L_2}{a_1a_{15}^2},~~~T^3_{15}=\frac{L_1^2}{a_{15}^2},~~~T^4_{12}=\frac{L_3a_3+L_5a_1a_{15}^2}{a_1^2a_{15}^3},\\
&T^4_{23}=-\frac{3L_2a_3-2L_4a_1a_{15}^2}{2a_1^2a_{15}^2},~~~T^5_{12}=\frac{L_{12}}{a_1^2a_{15}},~~~\\
&T^5_{13}=\frac{L_7a_3^2-(L_8+L_{10})a_1a_3a_{15}^2+L_{11}a_1^2a_{15}^4}{a_1^4a_{15}^2},~~~T^5_{14}=-\frac{L_7a_3-L_{10}a_1a_{15}^2}{a_1^3a_{15}^2},~~~\\
&T^5_{15}=-\frac{(L_2+2L_6)a_3-2L_9a_1a_{15}^2}{2a_1^2a_{15}^2},~~~T^5_{23}=-\frac{L_7a_3-L_8a_1a_{15}^2}{a_1^3a_{15}^2},\\
&T^5_{24}=\frac{L_{7}}{a_1^2a_{15}^2},~~~T^5_{25}=\frac{L_{6}}{a_1a_{15}^2},
\end{aligned}
\end{equation}
are given in terms of 
\begin{equation}\label{120}
L_1^2=\frac{1}{4}K_4^2-\bigg(I_2I_3+I_4^2+J_2+f^2_{u_2}+D_x(I_4+\frac{1}{2}K_4)\bigg),\\
\end{equation}
and $L_2,\dots,L_{12}$ are given in Appendix B.\\

It should be noted that $ L_2,L_3L_4, L_5,L_6,L_7, L_8,L_9, L_{10}, L_{11},L_{12}$ form a system of relative invariants that all vanish at the canonical form (\ref{8}) which belongs to this sub-branch. Thus, we go through the sub-branch 
\begin{equation} 
 L_2=L_3= L_4= L_5=L_6=L_7=L_8= L_9= L_{10}= L_{11}=L_{12}=0.
\end{equation}
Moreover, the relative invariant $L_1$ at the canonical form (\ref{8}) is
\begin{equation}
L_1=\frac{\sqrt{5(m'_{21})^2-4m_{21}m^{\prime\prime}_{21}}}{4m_{21}}
\end{equation}
and we consider the following sub-branch.

\subsubsection{Sub-branch $L_1=0$}
In this sub-branch, the relative invariant $L_1$ vanishes. So in this loop of reduction, the lifted coframe
\begin{equation}
\begin{pmatrix}
\theta_1\\
\theta_2\\
\theta_3\\
\theta_4\\
\theta_5
\end{pmatrix}=\begin{pmatrix}
a_1&\frac{J_2a_1}{J_3}&0&0&0\\
a_3&\frac{J_2a_3+a_1a_{15}^2}{J_3}&0&0&0\\
\frac{K_4a_1}{2a_{15}}&\frac{J_2K_4a_1}{2J_3a_{15}}&\frac{a_1}{a_{15}}&\frac{J_2a_1}{J_3a_{15}}&0\\
\frac{K_4a_3}{2a_{15}}&\frac{(J_2a_3+a_1a_{15}^2)K_4}{2J_3a_{15}}&\frac{a_3}{a_{15}}&\frac{J_2a_3+a_1a_{15}^2}{J_3a_{15}}&0\\
-2K_7a_{15}&\frac{2(K_5-J_2K_7)a_{15}}{J_3}&0&0&a_{15}
\end{pmatrix}\begin{pmatrix}
\omega_1\\
\omega_2\\
\omega_3\\
\omega_4\\
\omega_5
\end{pmatrix}.
\end{equation}
has constant torsions and hence no group dependent invariants. So,  we cannot normalize the remaining group parameters $a_1, a_3,a_{15}$. \\

Moreover, the modified Maurer-Cartan forms $\alpha_1, \alpha_2,\alpha_3$ are uniquely determined and the problem is determinant. The e-structure on the 
eight-dimensional prolonged space $M^{(1)}=M\times G_5$ is $(\theta,\alpha)$ where 
\begin{equation}\label{135}
\begin{pmatrix}
\theta\\
\alpha\\
\end{pmatrix}=
\begin{pmatrix}
S&0\\
\Gamma&\Lambda\\
\end{pmatrix}
\begin{pmatrix}
\omega\\
V
\end{pmatrix},
\end{equation}
$\alpha=(\alpha_1,\alpha_2,\alpha_3)^T, V=(da_1, da_3,da_{15})^T,$ and
\begin{equation}\label{125}
\begin{aligned}
&S=\begin{pmatrix}
a_1&\frac{J_2a_1}{J_3}&0&0&0\\
a_3&\frac{J_2a_3+a_1a_{15}^2}{J_3}&0&0&0\\
\frac{K_4a_1}{2a_{15}}&\frac{J_2K_4a_1}{2J_3a_{15}}&\frac{a_1}{a_{15}}&\frac{J_2a_1}{J_3a_{15}}&0\\
\frac{K_4a_3}{2a_{15}}&\frac{(J_2a_3+a_1a_{15}^2)K_4}{2J_3a_{15}}&\frac{a_3}{a_{15}}&\frac{J_2a_3+a_1a_{15}^2}{J_3a_{15}}&0\\
-2K_7a_{15}&\frac{2(K_5-J_2K_7)a_{15}}{J_3}&0&0&a_{15}
\end{pmatrix},\\
&\Lambda=\begin{pmatrix}
\frac{1}{a_1}&0&\frac{2}{a_{15}}\\
-\frac{a_3}{a_1^2}&\frac{1}{a_1}&-\frac{2a_3}{a_1a_{15}}\\
0&0&\frac{1}{a_{15}}\\
\end{pmatrix},\\
 &\Gamma=\begin{pmatrix}
-\frac{J_2I_{3p_1}+J_3R_1}{J_3}&-\frac{J_2R_3+J_3R_5+2K_4K_5}{J_3}&2K_7&\frac{2(J_2K_7-K_5)}{J_3}&-\frac{2I_3J_2+J_3R_7}{2J_3}\\
\frac{2J_3(R_3-I_{3p_2})a_3-I_{3p_1}a_1a_{15}^2}{J_3a_1}&\frac{(2J_3R_4-2R_6)a_3-R_2a_1a_{15}^2}{J_3a_1}&-\frac{4K_7a_3}{a_1}&-\frac{2K_7a_1a_{15}^2+4R_6a_3}{J_3a_1}&\frac{2J_3K_4a_3-I_3a_1a_{15}^2}{J_3a_1}\\
I_{3p_2}+R_{2}&\frac{-J_3R_4+K_4R_6}{J_3}&2K_7&\frac{2R_6}{J_3}&-K_4
\end{pmatrix},
\end{aligned}
\end{equation}
  where $I_3,J_2, J_3, K_4, K_5,K_6,K_7$ are given in (\ref{106}), (\ref{E40}), (\ref{E41})  and 
\begin{equation}\label{190}
\begin{aligned}
&R_1=(I_1+3I_4+2K_4)_{p_1}-3I_{3p_2}-K_4K_7,~~~R_2=(I_4+\frac{1}{2}K_4)_{p_1}+K_4K_7,~~~\\
&R_3=(I_4+\frac{1}{2}K_4)_{p_1}-K_4K_7, ~~~
R_4=(I_1+\frac{1}{2}K_4)_{p_2}-I_{2p_1},~~~R_5=(3I_1+\frac{3}{2}K_4)_{p_2}-2I_{2p_1},~~~\\
&R_6=J_2K_7-K_5,~~~R_7=2I_1+5K_4.
\end{aligned}
\end{equation}

Any canonical form (\ref{8}) that belongs to this sub-branch has the following constant structure equations 
\begin{equation}
\begin{aligned}
&d\theta_1=-\theta_1\wedge(\alpha_1-2\alpha_3)-\theta_3\wedge\theta_5,\\
&d\theta_2=-\theta_1\wedge\alpha_2-\theta_2\wedge\alpha_1-\theta_4\wedge\theta_5,\\
&d\theta_3=-\theta_3\wedge(\alpha_1-3\alpha_3),\\
&d\theta_4=\theta_1\wedge\theta_5-\theta_3\wedge\alpha_2-\theta_4\wedge(\alpha_1-\alpha_3),\\
&d\theta_5=-\theta_5\wedge\alpha_3,\\
&d\alpha_1=0,\\
&d\alpha_2=-2\alpha_2\wedge\alpha_3,\\
&d\alpha_3=0.
\end{aligned}
\end{equation} 
 Therefore, the canonical form (\ref{8}) belongs to this sub-branch has the $\text{zero}^{th}$-order invariant coframe (\ref{135}) with rank zero. Hence, using \cite[Theorem 8.19, page 271, Theorem 8.22 page 275]{R6}, it admits a symmetry group of dimension $8$.  This proves Theorem \ref{thm1}.

\begin{remark}\label{Rm1} 
 The transformation 
\begin{equation}\label{tr1}
\bar{x}=x, ~\bar{u}_1=u_1+u_2,~\bar{u}_2=u_1-u_2,
\end{equation}
  maps $W$ given in (\ref{W}) to a similar matrix $\overline{\vphantom{x}W}$ as
\begin{equation}
\overline{\vphantom{x}W}=\begin{pmatrix}
1&1\\
1&-1\\
\end{pmatrix}\begin{pmatrix}
J_2&-J_1\\
-J_3&-J_2\\
\end{pmatrix}\begin{pmatrix}
1&1\\
1&-1
\end{pmatrix}^{-1}=-\frac{1}{2}\begin{pmatrix}
J_1+J_3&-J_1-2J_2+J_3\\
J_1-2J_2-J_3&-J_1-J_3
\end{pmatrix}.
\end{equation}
Moreover, any system of two second-order ODEs whose generalized Wilczynski invariant matrix $W$ has rank one and contains zero entries satisfies $J_1\neq0,J_2=J_3=0$, or $J_1=J_2=0,J_3\neq0$. In both cases, (\ref{tr1}) transforms the original system into an equivalent system of two second-order ODEs whose generalized Wilczynski invariant matrix has rank one and all entries different from zero. This agrees with Remark (\ref{rm10}).
\end{remark}
\begin{remark}\label{a1}
Since $\det J=\begin{vmatrix}
a_1&a_2\\
a_3&a_4
\end{vmatrix}\neq0$, if $a_1=\phi_{1u_1}-\xi_{u_1}\frac{D_x\phi_1}{D_x\xi}=0$, then $a_3=\phi_{2u_1}-\xi_{u_1}\frac{D_x\phi_2}{D_x\xi}\neq0$. So, if the system is linearizable by a transformation with $a_1=0$, then it will be linearizable by another transformation with $a_1\neq0$ which is the composition of the first one and 
\begin{equation}\label{tr1-1}
\bar{x}=x, ~\bar{u}_1=u_2,~\bar{u}_2=u_1.
\end{equation}
\end{remark}

\section{Main Results}
In this section, we utilise the previous results to derive the canonical form for linearizable system of two second-order ODEs that belongs to \text{Branch I}, characterized by $\text{rank}~W=1, K_1=0,L_1=0$. Then we determine the necessary and sufficient conditions for equivalence to this canonical form. 
\subsection{Canonical Forms }

\begin{theorem}
Any linear system whose generalized Wilczynski invariant matrix $W$ has rank one and  that satisfies the conditions $K_1=0,L_1=0$ is equivalent to
\begin{equation}\label{cf}
u_1^{\prime\prime}=u_1+u_2,~~u_2^{\prime\prime}=-(u_1+u_2),
\end{equation}
 where  $W,K_1,L_1$ are given in (\ref{W}),(\ref{E41}),(\ref{120}).\end{theorem}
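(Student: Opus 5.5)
By the Laguerre--Forsyth theorem proved earlier, every linear system of two second-order ODEs is equivalent, through a point transformation of the form $\bar x=\xi(x)$, $\mathbf u=\xi'^{-1/2}P(x)\mathbf U$, to a system (\ref{8}). Rank, $K_1=0$ and $L_1=0$ are preserved under point transformations: rank $W$ is invariant, and $K_1,L_1$ are relative invariants. Also, by Remark \ref{rm10} and Remark \ref{Rm1}, we may assume $m_{11},m_{12},m_{21}$ are nonvanishing. So the proof reduces to analysing (\ref{8}) under the three conditions
\[
m_{11}^2+m_{12}m_{21}=0,\qquad m_{11}m_{21}'-m_{11}'m_{21}=0,\qquad 5(m_{21}')^2-4m_{21}m_{21}''=0,
\]
where the last two are the evaluations of $K_1$ and $L_1$ on (\ref{8}) recorded in Section 3.

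\textbf{Step 1: solve the three conditions.} The $K_1$ condition says $(m_{11}/m_{21})'=0$, so $m_{11}=c\,m_{21}$ with $c\neq0$ a constant. The rank condition then gives $m_{12}=-m_{11}^2/m_{21}=-c^2m_{21}$. For the $L_1$ condition, set $m_{21}=v^{-4}$. A direct computation gives $5(m_{21}')^2-4m_{21}m_{21}''=16v^{-9}v''$, so the condition is $v''=0$ and $v=\alpha x+\beta$. The system therefore becomes
\[
\mathbf u''=v^{-4}N\mathbf u,\qquad N=\begin{pmatrix}c&-c^2\\ 1&-c\end{pmatrix},\qquad N^2=0 .
\]

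\textbf{Step 2: make the coefficient constant.} If $\alpha\neq0$, I would use the projective change $\bar x=-1/(\alpha v)$ together with $\mathbf u=v\,\bar{\mathbf u}$ (after rescaling $x$ by $\alpha$). This is the standard map taking $y''=v^{-4}q\,y$ to $\bar y''=q\,\bar y$. It works because the Schwarzian of a Möbius map vanishes, so no trace term appears and the Laguerre--Forsyth form is preserved. If $\alpha=0$, the coefficient is already constant. Either way we obtain $\mathbf u''=N'\mathbf u$ with $N'$ a constant nonzero nilpotent $2\times2$ matrix.

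\textbf{Step 3: reduce to the target.} Any nonzero nilpotent $2\times 2$ matrix is similar to the matrix $\begin{pmatrix}1&1\\-1&-1\end{pmatrix}$ of (\ref{cf}), which is nilpotent of rank one. A constant linear map $\mathbf u=Q\bar{\mathbf u}$ realises this similarity. A possible leftover scalar factor $\lambda$ in front of the matrix can be absorbed by rescaling $x$ by a constant $\mu$: this multiplies the coefficient by $\mu^2$, and the rank-one nilpotent class is also invariant under scaling by the similarity $\operatorname{diag}(\lambda,1)$-type conjugation. This gives (\ref{cf}). As a final check, $W$ for (\ref{cf}) has all entries nonzero, and $K_1=L_1=0$ there since the coefficients are constant.

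The main obstacle I expect is the justification that $K_1$ and $L_1$, as evaluated on (\ref{8}), vanish exactly under the stated ODE conditions in Step 1. Beyond that, Step 2 needs some care to confirm that the Möbius substitution yields precisely a constant coefficient and not a multiple of it.
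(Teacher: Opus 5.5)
Your proof is correct and has the same skeleton as the paper's. You pass to the Laguerre--Forsyth form (\ref{8}), show that rank one together with $K_1=L_1=0$ forces the coefficient matrix to be $(\alpha x+\beta)^{-4}$ times a constant nilpotent matrix, and then map this to (\ref{cf}). The difference lies in how the two computational steps are done.

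\textbf{Deriving the intermediate form.} The paper only asserts (\ref{177}) ``by direct computations''. You actually derive it. The substitution $m_{21}=v^{-4}$, which turns $5(m_{21}')^2-4m_{21}m_{21}''=0$ into $16v^{-9}v''=0$, is the right device.

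\textbf{Reaching (\ref{cf}).} The paper writes down four explicit transformations, according to whether $c_1$ and $c_2$ vanish. You finish uniformly instead. For $\xi=-1/(\alpha v)$ one has $\xi'=v^{-2}$ and zero Schwarzian, so $\mathbf u=v\bar{\mathbf u}$ gives exactly $\bar{\mathbf u}''=N\bar{\mathbf u}$. No preliminary rescaling of $x$ is needed, which settles the worry in your last sentence. A constant similarity then takes any nonzero nilpotent $2\times2$ matrix to $\left(\begin{smallmatrix}1&1\\-1&-1\end{smallmatrix}\right)$.

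Your route explains why the factor $(c_2x+c_3)^{-4}$ is removable and needs no case split; it even covers $c=0$, which is the paper's cases (iii)--(iv). The paper's route gives explicit maps, which suit its constructive aims.

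\textbf{Sign in (\ref{177}).} Your intermediate form, with $m_{12}=-c^2m_{21}$, is the correct one. As printed, the first equation of (\ref{177}) reads $c_1(u_1+c_1u_2)$. That would give $\det W=-2c_1^2/(c_2x+c_3)^8\neq0$, so $W$ would not have rank one. The paper's maps in cases (i) and (ii) agree with your sign, not the printed one.

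\textbf{Cosmetic points.} Over $\mathbb{R}$ you should write $m_{21}=\pm v^{-4}$; the condition $v''=0$ is unchanged and the sign is absorbed in Step 3. The discussion of a leftover scalar $\lambda$ is unnecessary, since $\lambda N$ is again a nonzero nilpotent matrix and is therefore directly similar to the target.
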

\begin{proof}
Consider the Laguerre-Forsyth canonical form (\ref{8}) with $m_{11},m_{12},m_{21}$ all different from zero. By direct computations, one can show that the associated  generalized Wilczynski invariant matrix $W$ (\ref{170}) has rank one and the relative invariants $K_1,L_1$ vanish if and only if system (\ref{8}) has the form
\begin{equation}\label{177}
u_1^{\prime\prime}=\frac{c_1}{(c_2x+c_3)^4}(u_1+c_1u_2),~~u_2^{\prime\prime}=\frac{1}{(c_2x+c_3)^4}(u_1-c_1u_2), ~~c_2^2+c_3^2\neq0.
\end{equation} \\
Moreover, (\ref{177}) can be reduced further to 
\begin{equation}\label{178}
u_1^{\prime\prime}=u_1+u_2,~~~u_2^{\prime\prime}=-(u_1+u_2)~~~
\end{equation}
using different invertible transformations as explained in the following cases:
\begin{enumerate}
\item[(i)]  $c_1c_2\neq0$:
$$\bar{x}=-\frac{\sqrt{2c_1}}{2c_2(c_2x+c_3)},~\bar{u}_1=\frac{\sqrt{2c_1}}{2(c_2x+c_3)}\bigg(\frac{3c_2}{c_1}u_1-2c_2u_2\bigg),~\bar{u}_2=-\frac{c_2\sqrt{2}}{2(c_2x+c_3)\sqrt{c_1}}u_1,$$
 $\bigg|\frac{\partial (\bar{x},\bar{\textbf{u}})}{\partial (x,\textbf{u})}\bigg|=-\frac{c_2^2\sqrt{2c_1}}{2(c_2x+c_3)^4}\neq0.$\\
\item[(ii)] $c_1\neq0,~c_2=0$:
$$\bar{x}=\frac{\sqrt{c_1}}{c_3^2}x,~\bar{u}_1=\frac{1}{c_1}u_1,~\bar{u}_2=-u_2, ~~~\bigg|\frac{\partial (\bar{x},\bar{\textbf{u}})}{\partial (x,\textbf{u})}\bigg|=-\frac{1}{c_3^2\sqrt{c_1}}\neq0.$$ 
\item[(iii)] $c_1=0,~c_2\neq0$:
$$\bar{x}=-\frac{\sqrt{3}}{c_2x+c_3},~\bar{u}_1=-\frac{1}{c_2x+c_3}(u_1+3c_2^2u_2),~\bar{u}_2=\frac{3c_2^2}{c_2x+c_3}u_2,$$
~~~~$\bigg|\frac{\partial (\bar{x},\bar{\textbf{u}})}{\partial (x,\textbf{u})}\bigg|=-\frac{3c_2^3\sqrt{3}}{(c_2x+c_3)^4}\neq0.$
\item[(iv)] $c_1=0, c_2=0$:
$$\bar{x}=\frac{\sqrt{2}}{2c_3^2}x,~\bar{u}_1=u_1+u_2,~\bar{u}_2=u_1-u_2,~~~\bigg|\frac{\partial (\bar{x},\bar{\textbf{u}})}{\partial (x,\textbf{u})}\bigg|=-\frac{\sqrt{2}}{c_3^2}\neq0.$$
\end{enumerate}
It should be remarked that system (\ref{cf}) has only constant structure invariants and admits 8 point symmetries. 

\end{proof}
\begin{remark}
The canonical form (\ref{cf}) admits the following point symmetries
\begin{equation}
\begin{aligned}
&X_1=\partial_x, ~~X_2=\partial_{u_1}-\partial_{u_2}, ~~X_3=x\partial_{u_1}-x\partial_{u_2},~~X_4=u_1\partial_{u_1}+u_2\partial_{u_2},\\
& X_5=x\partial_{x}-2(u_1+u_2)\partial_{u_2},~~ X_6=u_2\partial_{u_1}-(2u_2+u_1)\partial_{u_2},~~\\
& X_7=\frac{1}{6}x^3\partial_{u_1}-(\frac{1}{6}x^3-x)\partial_{u_2},~~X_8=\frac{1}{2}x^2\partial_{u_1}+(1-\frac{1}{2}x^2)\partial_{u_2}.
\end{aligned}
\end{equation}
\end{remark}

\begin{remark}
The general solution of (\ref{cf}) is 
\begin{equation}\label{82}
u_1=\frac{1}{6}c_1x^3+\frac{1}{2}c_2x^2+c_3x+c_4,~u_2=-\frac{1}{6}c_1x^3-\frac{1}{2}c_2x^2+(c_1-c_3)x+(c_2-c_4).
\end{equation}
 So, solving the equivalence problem to the canonical form (\ref{cf}), then substitution back in (\ref{82}) will provide the solution of the linearizable one.\\
\end{remark}

\subsection{Main Theorem}
The main result of this paper is stated in the following theorem.
\begin{theorem}\label{thm1}
Any system of two second-order ODEs 
\begin{equation}\label{191}
u_1^{\prime\prime}=f^1(x,u_1,u_2,p_1,p_2),~~u_2^{\prime\prime}=f^2(x,u_1,u_2,p_1,p_2),
\end{equation}
whose generalize Wilczynski invariant matrix $W$ has nonzero entries is equivalent to the canonical form 
\begin{equation}\label{s2-1}
u_1^{\prime\prime}=u_1+u_2,~~u_2^{\prime\prime}=-(u_1+u_2),
\end{equation}
with eight  symmetries via a point transformation 
\begin{equation}\label{}
\Phi:~\bar{x}=\xi(x,u_1,u_2),~~\bar{u}_1=\phi_1(x,u_1,u_2),~~\bar{u}_2=\phi_2(x,u_1,u_2),
\end{equation}
with $a_1=\phi_{1u_1}-\xi_{u_1}\frac{D_x\phi_1}{D_x\xi}\neq0$ if and only if  \begin{equation}
\text{rank}~ W=1,
\end{equation}
 the relative invariants 
\begin{equation}
K_1,~K_2,~K_3-2K_5,~K_6,~K_8+4K_4K_5
\end{equation}
 evaluated at $f^1,f^2$ are identically zero and the associated invariant coframe $(\theta,\alpha)=(\theta_1,\theta_2,\theta_3,\theta_4,\theta_5,\alpha_1,\alpha_2,\alpha_3)$ defined on this branch has the constant structure equations
\begin{equation}\label{8sym}
\begin{aligned}
&d\theta_1=-\theta_1\wedge(\alpha_1-2\alpha_3)-\theta_3\wedge\theta_5,\\
&d\theta_2=-\theta_1\wedge\alpha_2-\theta_2\wedge\alpha_1-\theta_4\wedge\theta_5,\\
&d\theta_3=-\theta_3\wedge(\alpha_1-3\alpha_3),\\
&d\theta_4=\theta_1\wedge\theta_5-\theta_3\wedge\alpha_2-\theta_4\wedge(\alpha_1-\alpha_3),\\
&d\theta_5=-\theta_5\wedge\alpha_3,\\
&d\alpha_1=0,\\
&d\alpha_2=-2\alpha_2\wedge\alpha_3,\\
&d\alpha_3=0,
\end{aligned}
\end{equation} 
 where 
\begin{equation}\label{coframe}
\begin{pmatrix}
\theta\\
\alpha\\
\end{pmatrix}=
\begin{pmatrix}
S&0\\
\Gamma&\Lambda\\
\end{pmatrix}
\begin{pmatrix}
\omega\\
V
\end{pmatrix},
\end{equation}
$\alpha=(\alpha_1,\alpha_2,\alpha_3)^T, V=(da_1, da_3,da_{15})^T$ and
\begin{equation}\label{s3-10}
\begin{aligned}
&S=\begin{pmatrix}
a_1&\frac{J_2a_1}{J_3}&0&0&0\\
a_3&\frac{J_2a_3+a_1a_{15}^2}{J_3}&0&0&0\\
\frac{K_4a_1}{2a_{15}}&\frac{J_2K_4a_1}{2J_3a_{15}}&\frac{a_1}{a_{15}}&\frac{J_2a_1}{J_3a_{15}}&0\\
\frac{K_4a_3}{2a_{15}}&\frac{(J_2a_3+a_1a_{15}^2)K_4}{2J_3a_{15}}&\frac{a_3}{a_{15}}&\frac{J_2a_3+a_1a_{15}^2}{J_3a_{15}}&0\\
-2K_7a_{15}&\frac{2(K_5-J_2K_7)a_{15}}{J_3}&0&0&a_{15}
\end{pmatrix},\\
&\Lambda=\begin{pmatrix}
\frac{1}{a_1}&0&\frac{2}{a_{15}}\\
-\frac{a_3}{a_1^2}&\frac{1}{a_1}&-\frac{2a_3}{a_1a_{15}}\\
0&0&\frac{1}{a_{15}}\\
\end{pmatrix},\\
 &\Gamma=\begin{pmatrix}
-\frac{J_2I_{3p_1}+J_3R_1}{J_3}&-\frac{J_2R_3+J_3R_5+2K_4K_5}{J_3}&2K_7&\frac{2(J_2K_7-K_5)}{J_3}&-\frac{2I_3J_2+J_3R_7}{2J_3}\\
\frac{2J_3(R_3-I_{3p_2})a_3-I_{3p_1}a_1a_{15}^2}{J_3a_1}&\frac{(2J_3R_4-2R_6)a_3-R_2a_1a_{15}^2}{J_3a_1}&-\frac{4K_7a_3}{a_1}&-\frac{2K_7a_1a_{15}^2+4R_6a_3}{J_3a_1}&\frac{2J_3K_4a_3-I_3a_1a_{15}^2}{J_3a_1}\\
I_{3p_2}+R_{2}&\frac{-J_3R_4+K_4R_6}{J_3}&2K_7&\frac{2R_6}{J_3}&-K_4
\end{pmatrix},
\end{aligned}
\end{equation}
and $W,I_3,J_2, J_3, K_i,R_j, i=1,\dots,8, j=1,\dots,7$ are given in (\ref{W}),(\ref{106}), (\ref{E40}), (\ref{E41}) ,(\ref{190}).
\end{theorem}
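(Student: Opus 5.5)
The plan is to prove the two directions separately, using Cartan's solution of the equivalence problem for $e$-structures together with the reduction carried out in Section 3.

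\emph{Necessity.} Suppose $\Phi$ maps (\ref{191}) to (\ref{s2-1}) with $a_1\neq0$. Rank of $W$ is invariant because $\overline{W}=(D_x\xi)^{-2}JWJ^{-1}$, and the canonical form (\ref{s2-1}) has $W=\begin{pmatrix}1&1\\-1&-1\end{pmatrix}$, which has rank one. Each of $K_1,K_2,K_3-2K_5,K_6,K_8+4K_4K_5$ appears, in the fourth and fifth loops, as a coefficient of a torsion component whose other terms are group dependent (see (\ref{eq:array}) and the expressions for $T^1_{24},T^1_{25}$). These quantities therefore transform as relative invariants: they are multiplied by nonvanishing powers of $a_1,a_{15}$, up to terms involving relative invariants of earlier order. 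It follows that they vanish for (\ref{191}) if and only if they vanish for (\ref{s2-1}). A direct evaluation at $m_{11}=m_{12}=1$, $m_{21}=-1$ (constant coefficients, hence $K_1=0$, $L_1=0$, and the vanishing already noted for every form (\ref{8})) gives zero. Note that $K_6$ reduces to $-2K_1^3K_5=0$ and $K_8$ to $2K_1^3K_7-4K_4K_5=-4K_4K_5$.

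Because $a_1\neq0$, every normalization in loops two through five is legitimate for both systems. The lifted coframe $\theta=S\omega$ in (\ref{s3-10}) and the forms $\alpha$ in (\ref{coframe}) are therefore constructed compatibly, so that $\Phi^{(1)*}(\bar\theta,\bar\alpha)=(\theta,\alpha)$ on the prolonged space $M^{(1)}$. The structure equations of (\ref{s2-1}) are the constant ones (\ref{8sym}) computed in Section 3, and pullback preserves them, so (\ref{8sym}) holds for (\ref{191}).

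\emph{Sufficiency.} Assume rank $W=1$, the listed relative invariants vanish, and $(\theta,\alpha)$ satisfies (\ref{8sym}). The hypotheses ensure that every step of the reduction (normalizing $T^4_{15},T^4_{25}$, then $T^3_{34},T^3_{35},T^4_{34}$) applies with exactly the same group reductions $G\to G_1\to\cdots\to G_5$. The result is an $e$-structure on the eight-dimensional space $M\times G_5$. The constancy of the structure equations implies that $L_1=0$ and $L_2=\dots=L_{12}=0$, because these appear as torsion coefficients in the fifth loop. Both systems then carry $e$-structures of rank zero with identical constant structure coefficients. By \cite[Theorem 8.19, Theorem 8.22]{R6}, two rank-zero coframes with the same constant structure functions are locally equivalent, via a diffeomorphism $\Psi$ of the prolonged spaces with $\Psi^*(\bar\theta,\bar\alpha)=(\theta,\alpha)$.

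\emph{Main obstacle.} The hardest step is showing that $\Psi$ projects to a point transformation of the form (\ref{T1}) whose $a_1$ is nonzero. I would follow the standard Cartan argument. First, $\Psi$ preserves the span of $\{\theta_1,\theta_2,\theta_5\}$, equivalently of $du_i-p_idx$ and $dx$, modulo group parameters. Second, $\Psi$ maps the fibres of $M^{(1)}\to M$ to fibres, since these are the leaves of $\theta=0$. Together these show that $\Psi$ descends to a contact transformation of $J^1$ preserving the contact system. Third, the zero block structure of $G$ forces this contact transformation to be the prolongation of a point transformation. Finally, the group element relating the two lifted coframes has $a_1$ in its $(1,1)$ entry; since $a_1$ is a fibre coordinate in $S$ and appears with nonzero value there, we obtain $a_1\neq0$ for $\Phi$. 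Combining this with the canonical form theorem completes the equivalence to (\ref{s2-1}).
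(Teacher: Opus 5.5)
Your route is the paper's own: Cartan's reduction of Section 3 down to the rank-zero $e$-structure $(\theta,\alpha)$ on $M\times G_5$, followed by the rank-zero equivalence theorems of \cite{R6}. Your descent argument is correct and supplies detail the paper omits: the fibres of $M^{(1)}\to M$ are the leaves of $\theta=0$, and preserving $\mathrm{span}\{\theta_1,\theta_2,\theta_5\}=\mathrm{span}\{du_1,du_2,dx\}$ and $\mathrm{span}\{\theta_1,\dots,\theta_4\}$ makes the projection a prolonged point transformation carrying one system to the other.

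The step that fails is the last one. You infer $a_1\neq0$ for $\Phi$ from the nonvanishing of the fibre coordinate $a_1$ in $S$, but these are different quantities. If $\Psi$ covers $\psi$ and $\Psi^*\bar\theta=\theta$, then $\psi^*\bar\omega=g\,\omega$ with $g=\bar{S}(\bar{a})^{-1}S(a)$. The $a_1$ of the transformation is therefore $g_{11}=a_1/\bar{a}_1-\bar{J}_2(\bar{a}_1a_3-\bar{a}_3a_1)/(\bar{a}_1^2\bar{a}_{15}^2)$. At the normalization $\bar{a}_1=1,\bar{a}_3=0,\bar{a}_{15}=1$ used for (\ref{s2-1}), this is $a_1-a_3$, exactly the $(1,1)$ entry of $B$ in the paper's examples.

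Moreover, after the third normalization the upper-left block of $S$ has determinant $a_1^2a_{15}^2/J_3$. So the fibre coordinate $a_1$ is nonzero automatically on the whole branch and says nothing about $\Phi$. Here is a concrete counterexample. Let $M=\begin{pmatrix}1&1\\-1&-1\end{pmatrix}$. The map $(\bar{x},\bar{\mathbf{u}})\mapsto(\bar{x},(I-M)\bar{\mathbf{u}})$ is a symmetry of (\ref{s2-1}), since $I-M$ commutes with $M$. Its lift is an automorphism of $(\bar\theta,\bar\alpha)$, yet its $a_1$ equals $(I-M)_{11}=0$. So the equivalence delivered by \cite{R6} may well have $a_1=0$. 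The same conflation appears in your necessity paragraph, but there it does no harm: the normalizations rely on $J_3\neq0$ and on the fibre coordinate, not on $\Phi$'s $a_1$.

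The repair uses the symmetry group rather than the coframe. Given any equivalence $\Phi$, compose it with the symmetry $\bar{\mathbf{u}}\mapsto(I+\beta M)\bar{\mathbf{u}}$ of (\ref{s2-1}), which is generated by $X_4,X_6$ and has $\det(I+\beta M)=1$. This replaces the block $J=\begin{pmatrix}a_1&a_2\\a_3&a_4\end{pmatrix}$ by $(I+\beta M)J$, so the new $a_1$ is $a_1+\beta(a_1+a_3)$. Since $(a_1,a_3)\neq(0,0)$ by invertibility of $J$, a suitable $\beta$ makes it nonzero at the base point, hence nearby. This is what the paper's Remark \ref{a1} aims at. Composing with a genuine symmetry, rather than with the swap $u_1\leftrightarrow u_2$, has the advantage of keeping the target equal to (\ref{s2-1}).
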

\begin{remark}
The conditions $L_1=\dots=L_{12}=0$ are not stated explicitly in Theorem \ref{thm1}, as they are satisfied implicitly through the structure equations (\ref{8sym}).
\end{remark}


\section{Construction of the Linearizing Map Using the Invariant Prolongrd Coframe}
The linearizing point transformation of a linearizable system of two second-order ODEs whose generalize Wilczynski
invariant matrix $W$ has rank one and satisfies $K_1=0,L_1=0$ can be constructed using the invariant prolonged coframe. \\
\begin{proposition}\label{s3-8}
Assume that the system of two second-order ODEs
\begin{equation}\label{136}
u_1^{\prime\prime}=f^1(x,u_1,u_2,p_1,p_2),~~u_2^{\prime\prime}=f^2(x,u_1,u_2,p_1,p_2)
\end{equation}
 whose generalized Wilczynski invariant matrix $W$ has nonzero entries, is equivalent to the canonical form
\begin{equation}\label{138}
\bar{u}_1^{\prime\prime}=\bar{u}_1+\bar{u}_2,~~\bar{u}_2^{\prime\prime}=-(\bar{u}_1+\bar{u}_2)
\end{equation}
under the invertible point transformation
\begin{equation}\label{137}
\Phi:~~\bar{x}=\xi(x,u_j)~~\bar{u}_i=\phi_{i}(x,u_j).
\end{equation}
Then the linearizing point transformation (\ref{137}) where $\phi_{1u_1}-\xi_{u_1}\frac{D_x\phi_1}{D_x\xi}\neq0$
can be obtained by the following sequential procedure:\\
\textbf{Step 1.}  Determine the matrices 
\begin{equation}\label{s3-12}
B=(\bar{S})^{-1}S~\Omega, ~H=(\Lambda)^{-1}(\Gamma\Omega-\bar{\Gamma}B).
\end{equation}
 \textbf{Step 2.} Find the nonzero auxiliary functions $a_1,a_{15}$ and the auxiliary function $a_3$ by solving the system \\
\begin{equation}\label{168}
\begin{aligned}
&\begin{pmatrix}
a_{1u_1}&a_{1u_2}&a_{1p_1}&a_{1p_2}&D_xa_1\\
a_{3u_1}&a_{3u_2}&a_{3p_1}&a_{3p_2}&D_xa_3\\
a_{15u_1}&a_{15u_2}&a_{15p_1}&a_{15p_2}&D_xa_{15}\\
\end{pmatrix}=-H,
\end{aligned}
\end{equation}
\textbf{Step 3.} Find $\xi, \phi_i, \chi_i$ by solving the system
\begin{equation}\label{169}
\begin{aligned}
&\begin{pmatrix}
\phi_{iu_j}&\textbf{0}&D_x\phi_i\\
\chi_{iu_j}&\chi_{ip_j}&D_x\chi_i\\
\xi_{u_j}&\textbf{0}&D_x\xi
\end{pmatrix}=\begin{pmatrix}
I&\textbf{0}&\chi_i\\
\textbf{0}&I&\bar{f}^i\\
\textbf{0}&\textbf{0}&1\\
\end{pmatrix}B,
\end{aligned}
\end{equation}

where
\begin{equation}
\begin{aligned}
&\chi_i=\frac{D_x\phi_i}{D_x\xi},~~\bar{f}^1=\phi_1+\phi_2,~\bar{f}^2=-(\phi_1+\phi_2),\\
\end{aligned}
\end{equation}
 and $S,\Lambda,\Gamma$ are given in (\ref{s3-10}), $\Omega$ is given in (\ref{87}) and $\bar{S}, \bar{\Gamma}$ are the pullback of the matrices $S, \Gamma$ evaluated at the canonical form (\ref{138}), respectively, after substituting $\bar{a}_1=1, \bar{a}_3=0, \bar{a}_{15}=1$.
\end{proposition}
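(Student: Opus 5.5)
The plan is to derive Steps 1--3 directly from the equivalence of the invariant prolonged coframes given by Theorem \ref{thm1}. Since (\ref{136}) is equivalent to (\ref{138}) under $\Phi$ with $a_1\neq0$, both systems lie in Branch I and carry the rank-zero $e$-structures $(\theta,\alpha)$ and $(\bar\theta,\bar\alpha)$ of the form (\ref{coframe}). The equivalence theorem for $e$-structures (\cite[Theorem 8.19, 8.22]{R6}) then gives a prolonged map $\Phi^{(1)}:M\times G_5\to\bar M\times\bar G_5$. This map lifts $\Phi$, and it satisfies
\begin{equation*}
(\Phi^{(1)})^*\bar\theta=\theta,\qquad (\Phi^{(1)})^*\bar\alpha=\alpha .
\end{equation*}
The structure group acts transitively along the fibres, so we may fix the target fibre point $\bar a_1=1$, $\bar a_3=0$, $\bar a_{15}=1$. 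Then $a_1,a_3,a_{15}$ become functions on $M$, and they agree with the auxiliary functions (\ref{as}) of $\Phi$.

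First I write $\theta=S\omega$ and $\omega=\Omega\,(du_1-p_1dx,\dots,dx)^T$. The equality $\Phi^*\bar\theta=\theta$ becomes $\bar S\,\Phi^*\bar\omega=S\omega$. Hence
\begin{equation*}
\Phi^*\begin{pmatrix}d\bar u_i-\bar p_i d\bar x\\ d\bar p_i-\bar f^i d\bar x\\ d\bar x\end{pmatrix}
=\bar\Omega^{-1}\bar S^{-1}S\,\Omega\,(\text{contact basis}).
\end{equation*}
Up to the harmless factor $\bar\Omega$ this is the matrix $B$ of (\ref{s3-12}); I would check whether $\bar\Omega=I$ for the linear canonical form. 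For (\ref{138}) one has $\bar f^i_{\bar p_j}=0$, so $\bar I_k=0$ and $\bar\Omega=I$, which is consistent with the statement.

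Next I expand the left-hand side in the basis $du_j,\,dp_j,\,dx$. This uses $\bar p_i=\chi_i=D_x\phi_i/D_x\xi$ and $d\phi_i=\phi_{iu_j}(du_j-p_jdx)+D_x\phi_i\,dx$, and similarly for $\xi$ and $\chi_i$. Left-multiplying by the unipotent matrix that reinserts $\chi_i\,d\bar x$ and $\bar f^i\,d\bar x$ turns the relation into exactly (\ref{169}), with $\bar f^1=\phi_1+\phi_2$ and $\bar f^2=-(\phi_1+\phi_2)$. This gives Step 3.

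For Step 2 I use the $\alpha$ components. Pulling back $\bar\alpha=\bar\Gamma\bar\omega+\bar\Lambda\,d\bar a$ with $d\bar a=0$ on the fixed section gives $\bar\Gamma B\,(\text{contact basis})$. The source side is $\Gamma\Omega\,(\text{contact basis})+\Lambda V$. Equating the two yields
\begin{equation*}
V=-\Lambda^{-1}(\Gamma\Omega-\bar\Gamma B)(\text{contact basis})=-H(\text{contact basis}).
\end{equation*}
Expanding $da_k$ in the same basis, whose coefficients are $a_{k u_j}$, $a_{kp_j}$ and $D_xa_k$, gives (\ref{168}). The remaining point is sign and ordering bookkeeping of the basis $(du_1-p_1dx,\,du_2-p_2dx,\,dp_1-f^1dx,\,dp_2-f^2dx,\,dx)$.

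The main obstacle is justifying that the fibre normalization $\bar a=(1,0,1)$ is legitimate and that the resulting overdetermined systems (\ref{168}) and (\ref{169}) are compatible. For this I invoke the constant structure equations (\ref{8sym}). Because the two $e$-structures have identical constant structure coefficients, the Frobenius integrability of the graph ideal $\{\bar\theta-\theta,\ \bar\alpha-\alpha\}$ on the product space is automatic. Thus solutions exist through every point, and choosing the target fibre point amounts to composing with a symmetry of (\ref{138}). The invertibility of $\bar S$ and $\Lambda$ follows from $a_1a_{15}\neq0$, together with the assumption that $W$ has nonzero entries so that $J_3\neq0$.
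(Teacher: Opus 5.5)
Your proposal is essentially the paper's proof: pull back the identity $(\bar\theta,\bar\alpha)=(\theta,\alpha)$, fix $\bar a_1=1,\ \bar a_3=0,\ \bar a_{15}=1$ so that $d\bar a=0$, $\bar\Omega=I$ and $\bar\Gamma=0$, then read off (\ref{169}) from the $\theta$-rows and $V=-HU$, i.e.\ (\ref{168}), from the $\alpha$-rows (your direct use of $\Lambda^{-1}$ is cleaner than the paper's multiplication by $\mathrm{diag}(I,\Lambda^{-1})$). One side remark is false, although nothing in your derivation uses it: the section values $a_1,a_3,a_{15}$ are not all the functions (\ref{as}) of $\Phi$, because (\ref{169}) gives $B_{11}=\phi_{1u_1}-\xi_{u_1}\chi_1$ while $B=\bar S^{-1}S\Omega$ gives $B_{11}=a_1-a_3$, so only $a_3$ and $a_{15}$ coincide with their counterparts in (\ref{as}).
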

\begin{proof} 
Define the vectors $$U=(du_i-p_idx, ~dp_i-f^idx, ~dx)^{T},~~V=(da_1,da_3,da_{15})^{T}.$$ Then the symmetrical version of the coframe $(\theta,\alpha)$ given in (\ref{coframe}) provides
\begin{equation}
\begin{pmatrix}
\bar{S}&\textbf{0}\\
\bar{\Gamma}&\bar{\Lambda}\\
\end{pmatrix}\begin{pmatrix}
\bar{\Omega}&\textbf{0}\\
\textbf{0}&\text{I}\\
\end{pmatrix}\begin{pmatrix}
\bar{U}\\
\bar{V}\\
\end{pmatrix}=
\begin{pmatrix}
S&\textbf{0}\\
\Gamma&\Lambda\\
\end{pmatrix}\begin{pmatrix}
\Omega&\textbf{0}\\
\textbf{0}&\text{I}\\
\end{pmatrix}\begin{pmatrix}
U\\
V\\
\end{pmatrix},
\end{equation}
or equivalently,
\begin{equation}\label{141}
\begin{pmatrix}
\bar{U}\\
\bar{V}\\
\end{pmatrix}=
\begin{pmatrix}
(\bar{S}\bar{\Omega})^{-1}S\Omega&\textbf{0}\\
(\bar{\Lambda})^{-1}(\Gamma-\bar{\Gamma}(\bar{S})^{-1}S)\Omega&(\bar{\Lambda})^{-1}\Lambda\\
\end{pmatrix}\begin{pmatrix}
U\\
V\\
\end{pmatrix}.\\
\end{equation}
Substituting $\bar{a}_1=1, \bar{a}_3=0,\bar{a}_{15}=1$ in (\ref{141}) and then inserting the first prolongation of the point transformation (\ref{137}) with $\bar{p}_i=\chi_i=\frac{D_x\phi_i}{D_x\xi},$ results in
\begin{equation}\label{s3-11}
\begin{pmatrix}
\bar{U}\\
\textbf{0}\\
\end{pmatrix}=
\begin{pmatrix}
(\bar{S})^{-1}S\Omega&\textbf{0}\\
(\bar{\Lambda})^{-1}(\Gamma-\bar{\Gamma}(\bar{S})^{-1}S)\Omega&(\bar{\Lambda})^{-1}\Lambda\\
\end{pmatrix}\begin{pmatrix}
U\\
V\\
\end{pmatrix},
\end{equation}
where $\bar{U}=(d\phi_i-\chi_id\xi,~d\chi_i-\bar{f}^id\xi, ~d\xi)^{T}$ and
\begin{equation}
\bar{S}=\begin{pmatrix}
1&1&0&0&0\\
0&1&0&0&0\\
0&0&1&1&0\\
0&0&0&1&0\\
0&0&0&0&1
\end{pmatrix}, \bar{\Gamma}=\begin{pmatrix}
0&0&0&0&0\\
0&0&0&0&0\\
0&0&0&0&0\\
\end{pmatrix}, \bar{\Lambda}=\begin{pmatrix}
1&0&2\\
0&1&0\\
0&0&1
\end{pmatrix}.
\end{equation}
 Multiplying both sides of (\ref{s3-11}) by $\begin{pmatrix}
\mathbf{I}&0\\
0&\Lambda^{-1}\end{pmatrix}$  provides
\begin{equation}\label{142}
\begin{pmatrix}
\bar{U}\\
\textbf{0}\\
\end{pmatrix}=
\begin{pmatrix}
B&\textbf{0}\\
H&\textbf{I}\\
\end{pmatrix}\begin{pmatrix}
U\\
V\\
\end{pmatrix},
\end{equation}
where $B=(\bar{S})^{-1}S~\Omega, ~H=(\Lambda)^{-1}(\Gamma-\bar{\Gamma}(\bar{S})^{-1}S)\Omega$.\\

 Furthermore, using 
\begin{equation}
V=\begin{pmatrix}
a_{1u_1}&a_{1u_2}&a_{1p_1}&a_{1p_2}&D_xa_1\\
a_{3u_1}&a_{3u_2}&a_{3p_1}&a_{3p_2}&D_xa_3\\
a_{15u_1}&a_{15u_2}&a_{15p_1}&a_{15p_2}&D_xa_{15}\\
\end{pmatrix}U,
\end{equation}
together with (\ref{142}) provides (\ref{168}). 
Also, invoking
\begin{equation}
\overline{\vphantom{x}U}=\begin{pmatrix}
I&\textbf{0}&-\chi_i\\
\textbf{0}&I&-\bar{f}^i\\
\textbf{0}&\textbf{0}&1\\
\end{pmatrix}\begin{pmatrix}
d\phi_i\\
d\chi_i\\
d\xi
\end{pmatrix},
\end{equation}
one gets
\begin{equation}\label{167}
\overline{\vphantom{x}U}=\begin{pmatrix}
I&\textbf{0}&-\chi_i\\
\textbf{0}&I&-\bar{f}^i\\
\textbf{0}&\textbf{0}&1\\
\end{pmatrix}\begin{pmatrix}
\phi_{iu_j}&\textbf{0}&D_x\phi_i\\
\chi_{iu_j}&\chi_{ip_j}&D_x\chi_i\\
\xi_{u_j}&\textbf{0}&D_x\xi
\end{pmatrix}U.
\end{equation}
Hence, (\ref{142}) and (\ref{167}) provides (\ref{169}). 
This completes the proof.
\end{proof}
\vspace{0.5cm}

\begin{example}
 Consider the nonlinear system of second-order ODEs
\begin{equation}\label{s5}
u_1^{\prime\prime}={u^{'}_2}^{2}(u'_1+1)(x+u_1),~~u_2^{\prime\prime}={u^{'}_2}^3(x+u_1),
 \end{equation}
whose generalized Wilczynski invariant matrix is
\begin{equation}
W=\begin{pmatrix}
{u^{'}_2}^{2}(u'_1+1)&-u'_2(u'_1+1)^2\\
{u^{'}_2}^{3}&-{u^{'}_2}^{2}(u'_1+1)
\end{pmatrix}.
\end{equation}
The functions
\begin{equation}\label{s8}
 f^1=p^{2}_2(p_1+1)(x+u_1),~~f^2=p^{3}_2(x+u_1)
\end{equation}
 satisfy the conditions of Theorem \ref{thm1}. That implies system (\ref{s5}) admits eight Lie point symmetries and can be reduced to
\begin{equation}
\bar{u}_1^{\prime\prime}=\bar{u}_1+\bar{u}_2,~~\bar{u}_2^{\prime\prime}=-(\bar{u}_1+\bar{u}_2).
\end{equation}
The equivalence map can be constructed using Proposition \ref{s3-8} as follows:\\
 \textbf{Step 1.} We compute the matrices $B$ and $H$ given in (\ref{s3-12}). \\
\begin{equation}\label{E10}
B=\begin{pmatrix}
a_1-a_3&-\frac{((p_1+1)p_2^2-a_{15}^2)a_1-(p_1+1)p_2^2a_3}{p_2^3}&0&0&0\\
a_3&-\frac{a_1a_{15}^2+(p_1+1)a_3}{p_2^3}&0&0&0\\
0&-\frac{(x+u_1)a_1a_{15}}{p_2}&\frac{a_1-a_3}{a_{15}}&-\frac{((p_1+1)p_2^2-a_{15}^2)a_1-(p_1+1)p_2^2)a_3}{p_2^3a_{15}}&0\\
0&\frac{(x+u_1)a_1a_{15}}{p_2}&\frac{a_3}{a_{15}}&-\frac{a_1a_{15}^2+(p_1+1)a_3}{p_2^3a_{15}}&0\\
0&\frac{a_{15}}{p_2}&0&0&a_{15}\\
\end{pmatrix},
\end{equation}
and\begin{equation}\label{s3-13}
H=\begin{pmatrix}
0&0&0&0&0\\
0&-18(x+u_1)^2p_2^5a_3^2-3(x+u_1)p_2a_3&0&0&0\\
0&0&0&-\frac{a_{15}}{p_2}&-(x+u_1)p_2^2a_{15}\\
\end{pmatrix}.
\end{equation}
\textbf{Step 2.} Solving system (\ref{168}) provides the auxiliary functions  
\begin{equation}\label{102}
a_1=1,~ a_3=0,~a_{15}=p_2.
\end{equation}
\textbf{Step 3.} Substituting (\ref{102}) in  (\ref{E10}), then solving system (\ref{169}) results in  
$$\chi_1=\frac{p_1+p_2}{p_2}, ~\chi_2=\frac{1-p_2}{p_2}, ~ \phi_1=u_1+u_2, ~\phi_2=x-u_2,~\xi=u_2+1.$$
 Thus, the equivalence map is
\begin{equation}\label{}
\bar{x}=u_2+1, ~~~\bar{u}_1=u_1+u_2,~~~\bar{u}_2=x-u_2.
\end{equation}
\end{example}

\vspace{0.5cm}

\begin{example}
 Consider the nonlinear system of second-order ODEs
\begin{equation}\label{Os5}
u_1^{\prime\prime}=-\frac{(x^2u'_1-1)(xu_1+1){u^{'}_2}^{2}+2u'_1}{x},~~u_2^{\prime\prime}=-\frac{(2+(xu_1+1)x^2{u^{'}_2}^{2})u'_2}{x}
 \end{equation}
whose generalized Wilczynski invariant matrix is
\begin{equation}
W=\begin{pmatrix}
-{u^{'}_2}^{2}(x^2u'_1-1)&\frac{u'_2(x^2u'_1-1)^2}{x^2}\\
-x^2{u^{'}_2}^{3}&{u^{'}_2}^{2}(x^2u'_1-1)
\end{pmatrix}.
\end{equation}
The functions
\begin{equation}\label{Os8}
 f^1=-\frac{(x^2p_1-1)(xu_1+1)p^{2}_2+2p_1}{x},~~f^2=-\frac{(2+(xu_1+1)x^2p^{2}_2)p_2}{x}
\end{equation}
 satisfy the conditions of Theorem \ref{thm1}. That implies system (\ref{Os5}) admits eight Lie point symmetries and can be reduced to
\begin{equation}
\bar{u}_1^{\prime\prime}=\bar{u}_1+\bar{u}_2,~~\bar{u}_2^{\prime\prime}=-(\bar{u}_1+\bar{u}_2).
\end{equation}
The equivalence map can be constructed by using Proposition \ref{s3-8} as follows:
 \textbf{Step 1.} We compute the matrices $B$ and $H$ given in (\ref{s3-12}). \\
\begin{equation}\label{OE10}
B=\begin{pmatrix}
a_1-a_3&-\frac{\Delta+(x^2p_1^2-1)p_2^2}{x^2p_2^3a_1}&0&0&0\\
a_3&\frac{\Delta}{x^2p_2^3}&0&0&0\\
0&-\frac{(xu_1+1)a_1a_{15}}{p_2}&\frac{a_1-a_3}{a_{15}}&-\frac{\Delta+(x^2p_1-1)p_2^2a_1}{x^2p_2^3a_{15}}&0\\
0&\frac{(xu_1+1)a_1a_{15}}{xp_2}&\frac{a_3}{a_{15}}&-\frac{\Delta}{x^2p_2^3a_{15}}&0\\
0&\frac{a_{15}}{p_2}&0&0&a_{15}\\
\end{pmatrix},
\end{equation}
where $\Delta=a_1a_{15}^2-(x^2p_1-1)p_2^2a_3$ and\begin{equation}\label{Os3-13}
H=\begin{pmatrix}
0&0&0&0&0\\
0&\frac{(3x^2p_2^2(xu_1+1)+2)((6x^4u_1p_2^4+6x^3p_2^4+4xp_2^2)a_3+1)a_3}{xp_2}&0&0&0\\
0&0&0&-\frac{a_{15}}{p_2}&\frac{((xu_1+1)x^2p_2^2+2)a_{15}}{x}\\
\end{pmatrix}.
\end{equation}
\textbf{Step 2.} Solving system (\ref{168}) provides the auxiliary functions  
\begin{equation}\label{O102}
a_1=1,~ a_3=0,~a_{15}=p_2.
\end{equation}
\textbf{Step 3.} Substituting (\ref{O102}) in  (\ref{OE10}) and then solving system (\ref{169}) results in  
$$\chi_1=\frac{p_1}{p_2}, ~\chi_2=-\frac{1}{x^2p_2}, ~ \phi_1=u_1, ~\phi_2=\frac{1}{x},~\xi=u_2.$$
 Thus, the equivalence map is
\begin{equation}\label{O87}
\bar{x}=u_2, ~~~\bar{u}_1=u_1,~~~\bar{u}_2=\frac{1}{x}.
\end{equation}
\end{example}

\vspace{0.5cm}

\begin{example}
 Consider the nonlinear system of geodesic equations
\begin{equation}\label{b3}
u_1^{\prime\prime}=-x{u^{'}_2}^{2},~~u_2^{\prime\prime}=0.
 \end{equation}
whose generalized Wilczynski invariant matrix 
\begin{equation}
W=\begin{pmatrix}
0&-u'_2\\
0&0\end{pmatrix}\end{equation}
 has zero entries; hence applying the transformation (\ref{tr1}) reduces (\ref{b3}) to

\begin{equation}\label{OOs5}
u_1^{\prime\prime}=\frac{1}{4}x(u'_1-u'_2)^2,~~u_2^{\prime\prime}=\frac{1}{4}x(u'_1-u'_2)^2
 \end{equation}
whose generalized Wilczynski invariant matrix is
\begin{equation}
W=\begin{pmatrix}
-\frac{1}{2}(u'_1-u'_2)&\frac{1}{2}(u'_1-u'_2)\\
-\frac{1}{2}(u'_1-u'_2)&\frac{1}{2}(u'_1-u'_2)\end{pmatrix}.\end{equation}
The functions
\begin{equation}\label{Os8}
 f^1=\frac{1}{4}x(p_1-p_2)^2,~~f^2=\frac{1}{4}x(p_1-p_2)^2
\end{equation}
 satisfy the conditions of Theorem \ref{thm1}. That implies system (\ref{OOs5}) admits eight Lie point symmetries and can be reduced to
\begin{equation}\label{b4}
\bar{u}_1^{\prime\prime}=\bar{u}_1+\bar{u}_2,~~\bar{u}_2^{\prime\prime}=-(\bar{u}_1+\bar{u}_2).
\end{equation}
The equivalence map can be constructed using Proposition \ref{s3-8} as follows:
 \textbf{Step 1.} We compute the matrices $B$ and $H$ given in (\ref{s3-12}). \\
\begin{equation}\label{OOE10}
B=\begin{pmatrix}
a_1-a_3&-a_1+a_3-\frac{4a_1a_{15}^2}{p_1-p_2}&0&0&0\\
a_3&-a_3+\frac{4a_1a_{15}^2}{p_1-p_2}&0&0&0\\
xa_1a_{15}&-xa_1a_{15}&\frac{a_1-a_3}{a_{15}}&-\frac{a_1-a_3}{a_{15}}-\frac{4a_1a_{15}}{p_1-p_2}&0\\
-xa_1a_{15}&xa_1a_{15}&\frac{a_3}{a_{15}}&-\frac{a_3}{a_{15}}+\frac{4a_1a_{15}}{p_1-p_2}&0\\
\frac{a_{15}}{p_1-p_2}&\frac{a_{15}}{p_1-p_2}&0&0&a_{15}\\
\end{pmatrix},
\end{equation}
and\begin{equation}\label{OOs3-13}
H=\begin{pmatrix}
0&0&\frac{a_{1}}{p_1-p_2}&-\frac{a_{1}}{p_1-p_2}&0\\
0&\frac{(xa_1a_{15}^2+2)xa_1a^2_{15}}{2(p_1-p_2)}&\frac{a_3}{p_1-p_2}&\frac{4a_1a^2_{15}-(p_1-p_2)a_3}{(p_1-p_2)^2}&xa_1a_{15}^2\\
0&0&-\frac{a_{15}}{p_1-p_2}&-\frac{a_{15}}{p_1-p_2}&0\\
\end{pmatrix}.
\end{equation}
\textbf{Step 2.} Solving system (\ref{168}) provides the auxiliary functions  
\begin{equation}\label{OO102}
a_1=\frac{1}{p_1-p_2},~ a_3=\frac{1-4p_2}{p_1-p_2},~a_{15}=p_1-p_2.
\end{equation}
\textbf{Step 3.} Substituting (\ref{OO102}) in  (\ref{OOE10}), then solving system (\ref{169}) results in  
$$\chi_1=\frac{4p_1}{p_1-p_2}, ~\chi_2=-\frac{1-4p_1}{p_1-p_2}, ~ \phi_1=-4u_1, ~\phi_2=-x+4u_1,~\xi=u_1-u_2+1.$$
 Thus, the equivalence map that reduces (\ref{b3}) to the canonical form (\ref{b4}) is the composition of (\ref{tr1}) and 
\begin{equation}\label{O87}
\bar{x}=u_1-u_2+1, ~~~\bar{u}_1=-4u_1,~~~\bar{u}_2=-x+4u_1.
\end{equation}
\end{example}


\section{Conclusion}
The present investigation constitutes a step towards solving the lineariziation problem for a system of two second-order ODEs by means of Cartan's method under point transformation. Here we considered a linearizable class of systems admitting an eight-dimensional point symmetry group. We determined the canonical form and established the necessary and sufficient conditions for equivalence to this canonical form. We then presented a systematic method for constructing the equivalence map.
Examples were given to illustrate our approach.
\subsection*{Acknowledgements}
The authors would like to thank Birzeit University for its support and excellent research facilities. FM is grateful to Wits.\\\\
Conflict of Interest: The authors declare that they have no conflict of interest.

\appendix
\footnotesize

\section*{Appendix A.}
\begin{center}
\begin{tikzpicture}[
  level distance=1.5cm,
  level 1/.style={sibling distance=4cm, every node/.style={fill=cyan!20}},
  level 2/.style={sibling distance=5cm, every node/.style={fill=blue!20}},
  level 3/.style={sibling distance=3cm, every node/.style={fill=red!20}},
 level 4/.style={sibling distance=2cm, every node/.style={fill=green!20}},
level 5/.style={sibling distance=2cm, every node/.style={fill=purple!20}},
  every node/.style={draw, rectangle, rounded corners=5pt, align=center}
]

\node[align=center, fill=violet!20] {Linearizable system\\$u''_1=f^1(x,u_1,u_2,p_1,p_2),$\\$u''_2=f^2(x,u_1,u_2,p_1,p_2),$
}
 child {node[align=center, rounded corners=5pt]  {~~Rank $W=0$~~}
 child {node[align=center, rounded corners=5pt]  { $\tilde{S}=O$}
child {node[align=center,fill=purple!20] {admits fifteen\\point \\symmetries}}}}
  child {node[align=center, rounded corners=5pt]  {Rank $W=1$}
 child {node[align=center, rounded corners=5pt]   {$K_2=0,~K_3=2K_5$,\\
$K_6=-2K_1^3K_5$,\\
$K_8=2K_1^3K_7-4K_4K_5$}
   child {node[align=center, rounded corners=5pt]   {$~~~K_1=0~~~$}
  child {node[align=center, rounded corners=5pt,fill=green!20]   {$~~~L_1=0~~~$}
child {node[align=center] {has structure equations (\ref{8sym})\\ with constant invariants}
child {node[align=center] {admits eight\\point symmetries}}}
}
  }
}}
 child {node[align=center, rounded corners=5pt]  {~~Rank $W=2$~~}};
\end{tikzpicture}\\
\end{center}
 where
$\tilde{S},K_1,\dots,K_{8}, L_1$ are given in (\ref{Fels}), (\ref{E41}), (\ref{120}).
\section*{Appendix B.}
\begin{equation}
\begin{aligned}
&L_2=\frac{1}{2J_2}\bigg(J_2^2(2I_4+K_4)_{p_1}-J_2J_3(2I_4+K_4)_{p_2}+2J_2K_5(2I_4+K_4)+2(J_3J_{2u_2}-J_2J_{3u_2})\\&+2J_3(J_2I_{2p_1}-J_3I_{2p_2}+2I_2K_5)\bigg),\\
\end{aligned}\end{equation}
\begin{equation}
\begin{aligned}
&L_3=\frac{1}{4}J_3(2I_4+K_4)(2I_1+K_4)_{p_2}-\frac{1}{4}J_2(2I_1+K_4)(2I_4+K_4)_{p_1}+\frac{1}{2}J_2I_{3p_2}(2I_4+K_4)\\
&-\frac{1}{2}J_3I_{2p_1}(2I_1+K_4)+\frac{1}{2}(I_2J_3(2I_1+K_4)_{p_1}-I_3J_2(2I_4+K_4)_{p_2})+\frac{1}{2}(J_2K_{4u_1}-J_3K_{4u_2})\\
&+2K_5(I_4^2-\frac{1}{4}K_4^2+J_2+f^2_{u_2}+I_2I_3+\frac{1}{2}D_x(2I_4+K_4))+J_3(I_{2u_1}-I_{1u_2})+J_2(I_{4u_1}-I_{3u_2})\\
&+(I_2J_2I_{3p_1}-I_3J_3I_{2p_2}),\\
&L_4=-\frac{1}{4J_2^2}\bigg(J_2^2(3(2I_4+K_4)_{p_1}-2K_7(2I_4+K_4))+8J_2K_5(2I_4+K_4)\\
&+2J_3(2I_2K_5-J_3I_{2p_2})+4J_2J_3((I_1-I_4)_{p_2}-I_2K_7)+2(J_{2u_2}J_3-2J_2J_{3u_2})\bigg),\\
&L_5=-\frac{1}{4}(2I_1+K_4)(2I_4+K_4)_{p_1}+\frac{1}{2}(I_{3p_2}(2I_4+K_4)-I_3(2I_4+K_4)_{p_2})\\
&+2K_7(I_4^2-\frac{1}{4}K_4^2+J_2+I_2I_3+f^2_{u_2}+\frac{1}{2}D_x(2I_4+K_4))+\frac{1}{2}(2I_4+K_4)_{u_1}-I_{3u_2}\\
&-2K_5+I_2I_{3p_1},\\
&L_6=\frac{1}{J_2J_3}\bigg(2K_5(I_2J_3^2+I_3J_2^2)+J_3(J_3J_{2u_2}-J_2J_{3u_2})+J_3(J_2^2I_{3p_2}-J_3^2I_{2p_2})\bigg)\\
&+J_3(I_1-I_4)_{p_2}+2K_5(I_1+I_4+2K_4)-2D_xK_5,\\
&L_7=2J_2K_{5p_1}-2J_3K_{5p_2}+8K_5^2,\\
&L_8=-4K_5K_7-2K_{5p_1},\\
&L_9=-\frac{1}{4J_2^2J_3}\bigg(8J_2^3I_3K_7-8J_2^2I_3K_5+4I_2J_3^2K_5+J_2J_3(8K_5(2I_4+K_4)-4I_2J_3K_7)\\
&+4J_2J_3^2(I_1-I_4)_{p_2}+2J_3(J_3J_{2u_2}-2J_2J_{3u_2})-2J_3^3I_{2p_2}\bigg)-\frac{1}{4}\bigg(2K_7(4I_1-3K_4-2I_4)+4I_{3p_2}\\
&+(2I_4+K_4)_{p_1}-8D_xK_7\bigg),\\
&L_{10}=4K_5K_7-2J_2K_{7p_1}+2J_3K_{7p_2},\\
&L_{11}=2K_{7p_1}-4K_7^2,\\
&L_{12}=\frac{1}{J_2^2J_3}\bigg(J_2J_3^2(4K_5(I_1-I_4)_{p_2}-8I_2K_5K_7-2K_7J_{2u_2})+8J_2J_3K_5^2(2I_4+K_4)\\
&+2J_3K_5(J_3J_{2u_2}-2J_2J_{3u_2})-2K_5(2J_2^3I_{3p_1}+J_3^3I_{2p_2})+4K_5^2(I_2J_3^2-I_3J_2^2)\\
&+2J_2K_7(J_3^3I_{2p_2}+J_2^3I_{3p_1})\bigg)+J_2((2I_1+K_4)K_{7p_1}+2(I_3K_{7p_2}-K_7I_{3p_2})+2K_7(I_1-I_4)_{p_1}\\
&-2K_{7u_1})+J_3(2K_7(I_4-I_1)_{p_2}+2K_{7u_2}-2(I_2K_7)_{p_1}-(2I_4+K_4)K_{7p_2})+2K_5(I_4-I_1)_{p_1}\\
&-4K_5K_7(2I_4+3K_4)-(2I_1+K_4)K_{5p_1}+2(3K_5I_{3p_2}-I_3K_{5p_2})+2K_7J_{3u_2}+2K_{5u_1}\\
&+4(K_7D_xK_5-K_5D_xK_7).\\
\end{aligned}
\end{equation}

\end{document}